\newtheorem{definition}{Definition}[section]
\newtheorem{lemma}{Lemma}[section]
\newtheorem{remark}{Remark}[section]
\newtheorem{assumption}{Assumption}
\newtheorem{thom}{Theorem}[section]
\newcommand\msc[1]{\textbf{Mathematical Subject Classifications}: #1}
\definecolor{DarkBlue}{RGB}{0,60,137}
\definecolor{redd}{RGB}{179,30,0}
\definecolor{gre}{RGB}{0,153,0}
\newtcolorbox{problock}{
  colframe=black,
  colback=gray!2,
  arc=5pt,
  left=10pt,
  right=0pt,
  top=5pt,
  bottom=5pt,
  boxrule=1pt
}
\begin{document}
\title{On the Convergence of Constrained Gradient Method}

\author{Danqing Zhou\footnotemark[1], Hongmei Chen\footnotemark[2], Shiqian Ma\footnotemark[3], Junfeng Yang\footnotemark[4]}

\footnotetext[1]{School of Mathematics, Nanjing Audit University, Nanjing, P. R. China. Email: zhoudanqing@nau.edu.cn.}
\footnotetext[2]{School of Mathematical Sciences, Sichuan Normal University, Chengdu, P. R. China.
Email: hmchen@sicnu.edu.cn.}
\footnotetext[3]{Department of Computational Applied Mathematics and Operations Research, Rice University, Houston, TX, USA. Email: sqma@rice.edu.}
\footnotetext[4]{School of Mathematics, Nanjing University, Nanjing,  P. R. China. Email: jfyang@nju.edu.cn}

\maketitle

\begin{abstract}
The constrained gradient method (CGM) has recently been proposed to solve convex optimization and monotone variational inequality (VI) problems with general functional constraints. While existing literature has established convergence results for CGM, the assumptions employed therein are quite restrictive; in some cases, certain assumptions are mutually inconsistent, leading to gaps in the underlying analysis. This paper aims to derive rigorous and improved convergence guarantees for CGM under weaker and more reasonable assumptions, specifically in the context of strongly convex optimization and strongly monotone VI problems. Preliminary numerical experiments are provided to verify the validity of CGM and demonstrate its efficacy in addressing such problems.
\end{abstract}
 
\noindent \msc 15A18, 15A69, 65F15, 90C33

\section{Introduction}\label{sc:intro}

The constrained gradient method (CGM)  was first proposed in \cite{MJ22} for solving the constrained convex optimization problem:
\begin{equation}
\begin{aligned} \label{pro:min_f(x)}
\min_{x\in \mathcal{C}} f(x), 
\end{aligned}
\end{equation}
where $f:\mathbb{R}^n \rightarrow \mathbb{R}$ is a smooth and convex function, and the constraint set $\mathcal{C}\subseteq \mathbb{R}^n$ is defined by $m$ inequality constraints as follows: 
\begin{equation}\label{pro-con}
\mathcal{C}:= \{x\in \mathbb{R}^n \mid g_i(x)\leq 0,\, i  \in [m]\},
\end{equation}
with each $g_i$ being a smooth convex function. Note that throughout this paper, a function is referred to as ``smooth'' if its gradient operator is Lipschitz continuous, and $[m]:= \{1,\ldots,m\}$. 
This method has been further extended to accelerated variants \cite{MJ25}, online and stochastic nonconvex minimization settings \cite{KMM23, STM22, STMM23}, and notably, to solving variational inequality (VI) problems \cite{ZHM25}. A VI problem seeks an $x^*\in \mathcal{C}$ satisfying
\begin{equation}\label{pro-vi}
\langle F(x^*), x^*-x\rangle \leq 0, \quad \forall  x \in \mathcal{C},
\end{equation}
where $F:\mathbb{R}^n\rightarrow \mathbb{R}^n$ is a monotone operator. 
However, as we will discuss in more detail later, the assumptions in \cite{MJ22} and \cite{ZHM25} are quite restrictive, and certain assumptions in \cite{ZHM25} are even mutually inconsistent. The main purpose of this paper is to establish convergence guarantees for CGM under weaker and more reasonable assumptions, specifically when $f$ in \eqref{pro:min_f(x)} is strongly convex and $F$ in \eqref{pro-vi} is strongly monotone. 

Both problems \eqref{pro:min_f(x)} and \eqref{pro-vi} are of great interest. For instance, problem \eqref{pro:min_f(x)} encompasses a wide variety of applications, including but not limited to operations planning \cite{FJV86, PEM21}, support vector machines \cite{CV95, HDOP98, RN17}, and reinforcement learning \cite{KF11}; and problem \eqref{pro-vi} offers a powerful, unified framework for modeling and analyzing numerous problems, such as equilibrium problems \cite{N50, N51, N98}, optimization problems (e.g., matrix minimization and generative adversarial networks) \cite{H18, TYH11, G18}, and control theory \cite{SB84, NT88}. A more thorough review of algorithms for solving problems \eqref{ass:min_f(x)} and \eqref{pro-vi} is provided in Appendix \ref{appendix:lr}.

CGM for solving problem \eqref{pro:min_f(x)} mirrors the streamlined design of traditional projection-based methods in both the generation of descent directions and the choice of line-search-free step sizes. To avoid the notorious difficulty of projecting onto $\mathcal{C}$, it does not project each iterate directly onto the feasible region; instead, it projects the update direction onto a local and sparse velocity polytope. Built around the iterate $x$, this polytope takes the form: 
\begin{equation} \label{def:velocity}
        V_{\alpha}(x) = \{ v \in \mathbb{R}^n \mid \alpha g_i(x) + \nabla g_i(x)^\top v \leq 0, \; \forall i \in I_{x} \}
        \text{~~with~~} I_{x} := \{ i \in [m] \mid g_i(x) \geq 0 \}.
\end{equation}
Here,  $I_{x}$ denotes the set of constraints active at $x$, and $\alpha>0$ controls the tradeoff between optimizing the objective function and maintaining feasibility. 
This design embodies CGM's distinctive hallmark, differentiating it from traditional projection-based methods. Specifically, at the $(t+1)$-th iteration, CGM updates the iterates as follows:
\begin{equation} \label{update-vx}
\begin{aligned}
v^t &= {\arg \min}_{v \in V_{\alpha}(x^t)}   \| v + {\nabla f(x^t)} \|^2, \\
x^{t+1} &= x^{t} + \eta v^t,
\end{aligned}
\end{equation}
where $\eta>0$ denotes the step size. Regarding the update rule \eqref{update-vx}, \cite{MJ22} established the following results:
\vspace{-0.5em}
\paragraph{Results adapted from \cite{MJ22}.} 
\textit{Assume the following assumptions: 
\begin{enumerate}[label = (\roman*)]
    \item The Mangasarian-Fromovitz constraint qualification (MFCQ) holds at every $x\in\mathbb{R}^{n}$\footnote{In particular, this condition necessitates that for every $x\in \mathbb{R}^n$ and each $i\in I_{x}$, there exists a vector $w\in \mathbb{R}^n$ such that $\langle \nabla g_i(x), w\rangle <0$.}\!;
    \item $f$ is closed, proper, $\mu$-strongly convex and smooth; all $g_i$'s are smooth;
    \item The feasible set $\mathcal{C}$ is nonempty, convex and bounded;
    \item The parameters $\alpha$ and $\eta$ satisfy $0<\alpha<\mu$ and $\eta\leq 2/(\ell_{*}+\mu)$, where $\ell_{*}$ denotes an upper bound of the smoothness constant of the Lagrangian function $\mathcal{L}(x,\lambda(x))= f(x)+\sum_{i=1}^{m}\lambda_{i}(x)g_{i}(x)$,  and $$\lambda(x)\!\in\! \arg\max
    \Big\{\mathcal{L}(x,\lambda)-\frac{1}{2\alpha}\|\nabla_x \mathcal{L}(x,\lambda)\|^2 ~~\big|~~ \lambda\in \mathbb{R}^{m}_{+}, \; \lambda_i=0, \, \forall i \notin I_x \Big\}.$$
\end{enumerate}
Then, the sequence of iterates $\{x^t\}$ generated by \eqref{update-vx} from any initial point $x^0\in\mathbb{R}^n$ converges to the unique minimizer of problem \eqref{pro:min_f(x)}. Furthermore,  there exist a constant $C>0$ and a positive integer $N$ such that
\[
\min_{t\in \{0,\ldots,T\}}\|x^t-x^*\|^2 \leq C/T, \quad \forall T\geq N.
\]
}

\noindent Note that assumption (iv) is apparently impractical, since $\ell_*$ is typically unavailable, and therefore it is not clear how to choose $\eta$ in practice.

Recently, the authors of \cite{ZHM25} extended CGM to solve the VI problem \eqref{pro-vi}, with several modifications made to the update rule \eqref{update-vx}. Specifically, to ensure the boundedness of the sequences $\{x^t\}$ and $\{v^t\}$, they introduced an auxiliary constraint $g_{m+1}(x) = \|x\|^2 - \widehat{D}^2\leq 0$, where $\widehat{D}>0$ denotes an upper bound on the norm of elements in $\mathcal{C}$, i.e., $\|x\|\leq \widehat{D}$ for all $x\in \mathcal{C}$. 
% given that $\mathcal{C} \subseteq \mathcal{B}(0, \widehat{D})$, i.e., for all $x\in \mathcal{C}$, $\|x\|\leq \widehat{D}$.
As a result, the set of active constraints at $x$ in \cite{ZHM25} is adjusted accordingly to  $I_{x} = \{ i \in [m+1] \mid g_i(x) \geq 0 \}$.
Their CGM for the VIs iterates as follows:
\begin{equation}  \label{update-vx-vi}
\begin{aligned}
v^t &= {\arg \min}_{v \in V_{\alpha}(x^t)}   \| v + F(x^t) \|^2, \\
x^{t+1} &= x^{t} + \eta_t v^t.
\end{aligned}
\end{equation}
Note that the definition of $V_\alpha(x^t)$ in \eqref{update-vx-vi} differs from that in \eqref{update-vx} due to the difference in the definition of the active set $I_{x}$. Given that $\mathcal{C} \subset \mathcal{B}(0, \widehat{D})$, \cite{ZHM25} analyzed the convergence of \eqref{update-vx-vi} under two settings: when $F$ is monotone and when $F$ is strongly monotone. The main results for the strongly monotone case are summarized as follows:   
\vspace{-0.5em}
\paragraph{Results adapted from \cite{ZHM25}.} \textit{Assume the following assumptions:
\begin{enumerate}[label = (\roman*)]
\item $F$ is continuous, $\mu$-strongly monotone, and $L_F$-bounded, i.e., $\|F(x)\|\leq L_F$ for all $x\in \mathbb{R}^n$;
\item All $g_i$'s are convex, $L_g$-Lipschitz continuous, and $\ell_g$-smooth;
\item The feasible set $\mathcal{C}$ is nonempty, closed and bounded, i.e., $\mathcal{C} \subseteq \mathcal{B}(0, \widehat{D})$.
\end{enumerate}
Let $\{x^t\}$ be the sequence generated by \eqref{update-vx-vi}. Then CGM in \eqref{update-vx-vi} achieves the following convergence results:
\begin{enumerate}[label = {(\alph*)}]
\item Let $T\geq 2$ be any integer. When the parameters are chosen as $\eta_t = \frac{1}{\mu(t+1)}$ for $t=0,\ldots,T$, and $\alpha=\frac{\mu(\gamma-1)}{\gamma+1}$ with $\gamma>1$, there hold
\[
\langle F(x), \bar{x}^{T}-x\rangle = \mathcal{O}\left(\frac{1}{T}\right), \; \forall x\in \mathcal{C}, \text{ and } g_i(\bar{x}^{T}) = \mathcal{O}\left(\frac{\gamma^2+\gamma^2\zeta(1+2/(\gamma+1))}{(T+1)^{1-2/(\gamma+1)}}\right), \; \forall i\in [m+1],
\]
where $\bar{x}^{T} = \frac{2}{T(T-1)}\sum_{t=0}^{T-1} tx^t$, and
$\zeta(p)=\sum_{s=1}^{\infty} 1/s^p$ represents the Riemann zeta function.
\item When $F=\nabla f$, with $f$ being a $\mu$-strongly convex and $\ell_f$-smooth function, and the parameters are chosen as $\eta_t\equiv \eta = (\log T)/(\mu T)$, $\alpha=\mu$, where $T$ satisfies $T\geq \max\{3,\ell_f/\mu\}\log T$, there hold
\[
f(x^{T})-f(x^*)=\mathcal{O}\left(\frac{1}{T}\right),\; \text{ and } g_i(x^{T})=\tilde{\mathcal{O}}\left(\frac{1}{T}\right), \; \forall i\in [m+1].
\]
\end{enumerate}}

\noindent Here, we note that the convergence bound for $g_i(\bar{x}^T)$ in part (a) is weaker than $\mathcal{O}(1/T)$: while taking $\gamma\rightarrow+\infty$ makes the denominator approach $T+1$, the numerator also tends to $+\infty$ in this case, which degenerates the bound. Moreover, assumption (i) above is flawed, as discussed below. 
First, it is not feasible to impose the joint assumptions that the operator $F$ is both $\mu$-strongly monotone and $L_F$-bounded. To verify this point, suppose for contradiction that both conditions hold. Then, choose $x,y\in\mathbb{R}^n$ satisfying $\|y-x\|> 2L_{F}/\mu$; we have 
    \[
    2L_{F}\|y-x\|\geq \|F(y)-F(x)\| \|y-x\|\geq \langle F(y)-F(x), y-x\rangle\geq \mu\| y-x\|^2. 
    \]
This would imply that $\|y-x\|\leq 2L_{F}/\mu$, which cannot be true for arbitrary $x$ and $y$.
Second, for the minimization problem \eqref{pro:min_f(x)}, we have $F = \nabla f$, and 
the requirement that $\|F(x)\|\leq L_F$ for all $x$ simplifies to $\|\nabla f(x)\|\leq L_f$ for all $x$. This, as clearly noted in \cite{G19},  contradicts the $\mu$-strong convexity of $f$.  

\paragraph{Motivation and contributions.} 
Based on the above discussions, our primary motivation in this paper is to provide convergence analyses for CGM under more reasonable and justifiable assumptions. Our contributions are twofold. 
\begin{itemize}
    \item[(i)] For strongly convex optimization problems with general functional constraints, we establish the convergence of CGM without relying on the flawed assumption $\|\nabla f\|\leq L_f$ used in \cite{ZHM25}, while further relaxing other requirements.
    Specifically, we neither assume that the constraint set $\mathcal{C}$ is bounded 
    nor that all $g_i$'s are $L_g$-Lipschitz continuous.
    Additionally, we establish the same convergence rates as in \cite{ZHM25} for both constant and varying step sizes. In contrast to \cite{MJ22}, our analysis does not require dual information, such as $\ell_*$, which is not practically available.
    
    \item[(ii)] For strongly monotone VIs with general functional constraints, we conduct a convergence analysis of CGM under more appropriate and relaxed conditions. Specifically, our analysis replaces the flawed assumption $\|F(x)\|\leq L_F$ for all $x$ with a more reasonable one, eliminates the requirement that all $g_i$'s are $L_g$-Lipschitz continuous, and improves upon the convergence rates in \cite{ZHM25}---which rely on the aforementioned flawed assumptions.
\end{itemize}

\paragraph{Organization.} The rest of this paper is organized as follows.
In Section \ref{sec:pre}, we introduce the notation, optimality measures employed throughout the paper, and present several commonly used lemmas.
In Section \ref{sec:min_f}, we provide a refined convergence analysis for the minimization problem. We exclude inappropriate assumptions and the auxiliary constraint, and establish convergence rates for both constant and varying step sizes.
In Section \ref{sec:vi}, we conduct a refined convergence analysis for VIs under relaxed conditions. We derive a convergence rate for the optimality measure that matches the original result, while achieving an improved rate for the feasibility measure compared to the result in \cite{ZHM25}.
In Section \ref{sec:ne}, we carry out numerical experiments to validate the proposed convergence results.
Finally, we draw some concluding remarks in Section \ref{sec:conclusion}.

\section{Preliminaries} \label{sec:pre}
In this section, we first introduce the notation and optimality measures employed throughout the paper, followed by the presentation of two useful lemmas.

\paragraph{Notation.} 
Throughout the paper, we use the following notation. 
The cardinality of a set $\mathcal{W}$ is denoted by $|\mathcal{W}|$.  
Let $\mathbb{R}^n$ be a finite-dimensional Euclidean space, with the inner products and its induced norm denoted by $\langle\cdot, \cdot\rangle$ and $\|\cdot\| =\sqrt{\langle\cdot,\cdot\rangle}$, respectively.  
Let $\mathbb{R}_{+}^{n}$ denote the set of $n$-dimensional real vectors with nonnegative entries.  The standard basis vector in $\mathbb{R}^n$ is denoted by $\mathbf{e}_i = (\delta_{ij})_{j=1}^n$, where $\delta_{ij}$ is the Kronecker delta, i.e., $\delta_{ij}=1$ if $j=i$, and $\delta_{ij}=0$  otherwise.
Furthermore, $\mathbf{1}_n \in \mathbb{R}^n$ denotes the $n$-dimensional column vector of all ones, and $\mathbf{0}_n \in \mathbb{R}^n$ the $n$-dimensional zero vector;  $I_n$ stands for the identity matrix of order $n$. For a symmetric matrix $M$ of appropriate dimensions, 
$M\succeq 0$ (or $M\succ 0$) denotes that $M$ is positive semidefinite (or positive definite), respectively. 

An operator $F:\mathcal{X}\rightarrow \mathbb{R}^n$ defined on a convex set $\mathcal{X}\subseteq\mathbb{R}^n$ is called $\mu$-strongly monotone with parameter $\mu>0$ if $\langle F(x)-F(y),x-y\rangle \geq \mu \|x-y\|^2$ for all $x,y \in \mathcal{X}$. When $\mu=0$, the operator is simply referred to as a monotone operator. 
If there exists $\ell_F>0$ such that $\|F(x)-F(y)\|\leq \ell_{F}\|x-y\|$ for all $x,y \in \mathcal{X}$, then $F$ is $\ell_F$-Lipschitz continuous. 
Similarly, a function $\theta:\mathcal{X}\rightarrow \mathbb{R}$ is said to be $L_{\theta}$-Lipschitz if $|\theta(x)-\theta(y)|\leq L_{\theta}\|x-y\|$ for all $x,y\in \mathcal{X}$. 
When $\theta(\cdot)$ is differentiable, $L_{\theta}$-Lipschitz is equivalent to $\|\nabla\theta (x)\|\leq L_{\theta}$ for all $x\in \mathcal{X}$. 
A function $\theta$ is called $\ell_{\theta}$-smooth if it is differentiable and its gradient operator is $\ell_{\theta}$-Lipschitz, i.e., $\|\nabla \theta(x) - \nabla \theta(y)\|\leq \ell_{\theta}\|x-y\|$  for all $x,y\in \mathcal{X}$. 
We denote the set of optimal solutions to problem \eqref{pro-vi} as $\mathcal{X}^*$, and let $x^*$ be any element in $\mathcal{X}^*$. Finally, $\tilde{\mathcal{O}}$ is the  big-$\mathcal{O}$ notation with logarithmic factors ignored.

\paragraph{Optimality measures.} 
The VI problem in \eqref{pro-vi} is commonly designated as the Stampacchia VI problem, and a solution to \eqref{pro-vi} is referred to as a strong solution. 
In this paper, our goal is to find a weak solution $x^* \in \mathcal{C}$, which solves the Minty VI problem:
\begin{equation}
\langle F(x),x^*-x\rangle \leq 0, \quad \forall x\in \mathcal{C}. \label{pro-vi-minty}
\end{equation}
This Minty VI framework has been widely adopted, see, e.g., \cite{N04, N07, JNT11, ZHM25}. 
It is well known (see, e.g., \cite{M62, KS20}) that if $F$ is continuous and monotone, the strong and weak solutions coincide. 
For an approximate solution $z\in\mathbb{R}^n$, we use $\max_{x\in \mathcal{C}}\langle F(x),z-x\rangle$ to measure optimality 
and $g_i(z)$ ($i\in[m]$) to measure  feasibility. Specifically, we adopt the notion of \textit{weak $\epsilon$-solution} as in \cite{ZHM25}.
\begin{definition}[Weak $\epsilon$-solution]\label{weak-e-sol}
Let $\epsilon>0$, and recall that the feasible set $\mathcal{C}$ is defined in \eqref{pro-con}.
A point $z\in \mathbb{R}^n$  is called a weak $\epsilon$-solution of \eqref{pro-vi}  if $\mathcal{G}(z) := \max_{x\in \mathcal{C}}\langle F(x),z-x\rangle \leq \epsilon$ and $g_i(z)\leq \epsilon$ for all $i \in [m]$.
\end{definition}

\paragraph{Useful lemmas.} Next, we present two lemmas that encapsulate key properties of the velocity polytope $V_{\alpha}(x)$ defined in \eqref{def:velocity}.
These properties are implicitly used in \cite{ZHM25} but not explicitly formulated as lemmas. For the sake of completeness, we provide their proofs herein. 

\begin{lemma}\label{lemma:depict_veloc_1}
    For all $x\in \mathcal{C}$ and $\alpha>0$, we have $\alpha(x-x^{t})\in V_{\alpha}(x^t)$.
\end{lemma}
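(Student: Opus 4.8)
The plan is to unwind the definition of the velocity polytope $V_\alpha(x^t)$ and then invoke the convexity of each $g_i$ together with the feasibility of $x$. Fix any $x\in\mathcal{C}$ and $\alpha>0$, and set $v:=\alpha(x-x^t)$. By the definition in \eqref{def:velocity}, proving $v\in V_\alpha(x^t)$ amounts to verifying the inequality $\alpha g_i(x^t)+\nabla g_i(x^t)^\top v\le 0$ for every index $i\in I_{x^t}$, where $I_{x^t}=\{i : g_i(x^t)\ge 0\}$ is the active set at $x^t$.

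The one substantive step is the first-order characterization of convexity: since each $g_i$ is convex and differentiable, we have $g_i(x)\ge g_i(x^t)+\nabla g_i(x^t)^\top(x-x^t)$, equivalently $\nabla g_i(x^t)^\top(x-x^t)\le g_i(x)-g_i(x^t)$. Multiplying this by $\alpha>0$ (which preserves the inequality) and using $v=\alpha(x-x^t)$ yields $\nabla g_i(x^t)^\top v\le \alpha g_i(x)-\alpha g_i(x^t)$, i.e. $\alpha g_i(x^t)+\nabla g_i(x^t)^\top v\le \alpha g_i(x)$. Because $x\in\mathcal{C}$, we have $g_i(x)\le 0$ for all $i\in[m]$, so the right-hand side is nonpositive; hence the defining constraint of $V_\alpha(x^t)$ holds for this $i$. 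Since $i\in I_{x^t}$ was arbitrary (in fact the bound holds for all $i\in[m]$), we conclude $v=\alpha(x-x^t)\in V_\alpha(x^t)$.

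I do not anticipate any real obstacle here: the statement is essentially convexity plus feasibility repackaged, and the only thing to be careful about is the bookkeeping with the factor $\alpha$ and the direction of the inequality (both are harmless since $\alpha>0$). It is worth noting in the proof that the argument uses neither feasibility of the iterate $x^t$ nor any relation between $\alpha$ and the strong-convexity/strong-monotonicity constants — the lemma is purely geometric, and it is exactly what ensures $V_\alpha(x^t)\neq\emptyset$ whenever $\mathcal{C}\neq\emptyset$, so that the projection subproblems in \eqref{update-vx} and \eqref{update-vx-vi} are well defined.
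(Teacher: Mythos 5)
Your proof is correct and follows essentially the same route as the paper's: first-order convexity of $g_i$ gives $g_i(x^t)+\nabla g_i(x^t)^\top(x-x^t)\le g_i(x)\le 0$, and multiplying by $\alpha>0$ yields the defining inequality of $V_\alpha(x^t)$. The added remarks on nonemptiness and well-posedness of the projection subproblem are accurate but not part of the paper's argument.
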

\begin{proof}
Let $x\in \mathcal{C}$, i.e., $g_i(x)\leq 0$ for all $i\in[m]$. 
Then, for any $i\in I_{x^t}$, the convexity of $g_i$ implies that
$g_i(x^{t}) +  \langle \nabla g_i(x^{t}), x-x^{t}\rangle \leq g_i(x) \leq 0$.
Hence, the conclusion $\alpha(x-x^{t})\in V_{\alpha}(x^t)$ follows from 
the definition of $V_{\alpha}(x^t)$ in \eqref{def:velocity} and $\alpha>0$.
\end{proof}

\begin{lemma}\label{lemma:depict_veloc_2}
    For all $x\in \mathcal{C}$ and $v \in V_{\alpha}(x^t)$, we have 
    $v+(x-x^t)\in V_{\alpha}(x^t)$.
\end{lemma}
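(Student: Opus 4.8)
The plan is to verify directly that the vector $v+(x-x^t)$ satisfies every defining inequality of $V_\alpha(x^t)$, namely that for each $i\in I_{x^t}$ one has $\alpha g_i(x^t)+\langle\nabla g_i(x^t),\,v+(x-x^t)\rangle\le 0$. The natural way to do this is to split the left-hand side into the contribution from $v$ and the contribution from $x-x^t$, and bound each piece separately using the hypotheses available.

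First I would fix an arbitrary index $i\in I_{x^t}$ and expand
\[
\alpha g_i(x^t)+\langle\nabla g_i(x^t),\,v+(x-x^t)\rangle
=\bigl(\alpha g_i(x^t)+\langle\nabla g_i(x^t),v\rangle\bigr)+\langle\nabla g_i(x^t),\,x-x^t\rangle .
\]
Since $v\in V_\alpha(x^t)$, the first bracket is $\le 0$ by the definition of the velocity polytope in \eqref{def:velocity}. For the second term, I would invoke convexity of $g_i$ exactly as in the proof of Lemma \ref{lemma:depict_veloc_1}: for $x\in\mathcal{C}$ and $i\in I_{x^t}$ we have $g_i(x^t)+\langle\nabla g_i(x^t),x-x^t\rangle\le g_i(x)\le 0$, hence $\langle\nabla g_i(x^t),x-x^t\rangle\le -g_i(x^t)$. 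Adding the two bounds gives
\[
\alpha g_i(x^t)+\langle\nabla g_i(x^t),\,v+(x-x^t)\rangle\le 0+\bigl(-g_i(x^t)\bigr)=-g_i(x^t)\le 0,
\]
where the last inequality uses $g_i(x^t)\ge 0$ because $i\in I_{x^t}$. Since $i$ was arbitrary, $v+(x-x^t)\in V_\alpha(x^t)$.

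There is essentially no hard step here: the lemma is a short bookkeeping argument, and the only thing to be careful about is keeping track of the sign of $g_i(x^t)$ on the active set and not accidentally requiring $\alpha\le 1$ (which is not needed — the $\alpha g_i(x^t)$ term is dominated on its own by $v\in V_\alpha(x^t)$, and the extra $-g_i(x^t)$ slack coming from $x-x^t$ is what absorbs it). An alternative, slightly slicker route would be to combine the two previous lemmas: write $v+(x-x^t)=v+\tfrac1\alpha\cdot\alpha(x-x^t)$, note $\alpha(x-x^t)\in V_\alpha(x^t)$ by Lemma \ref{lemma:depict_veloc_1}, and then argue that $V_\alpha(x^t)$ is closed under the operation $v\mapsto v+w$ whenever $w$ lies in the corresponding \emph{homogeneous} cone $\{w:\langle\nabla g_i(x^t),w\rangle\le 0,\ \forall i\in I_{x^t}\}$; but since $\alpha(x-x^t)$ need not lie in that homogeneous cone, this shortcut does not quite work, and the direct computation above is cleanest.
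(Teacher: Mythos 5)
Your proposal is correct and follows essentially the same route as the paper: split the defining inequality into the $v$-part, which is nonpositive since $v\in V_\alpha(x^t)$, and the $\langle\nabla g_i(x^t),x-x^t\rangle$ part, which is controlled by convexity of $g_i$ together with $g_i(x)\le 0$ and $g_i(x^t)\ge 0$ on the active set. The only cosmetic difference is that you bound $\langle\nabla g_i(x^t),x-x^t\rangle\le -g_i(x^t)$ and invoke $g_i(x^t)\ge 0$ at the end, whereas the paper uses $g_i(x^t)\ge 0$ up front to conclude $\langle\nabla g_i(x^t),x-x^t\rangle\le 0$ directly; these are the same inequality rearranged.
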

\begin{proof}
Let $x\in \mathcal{C}$, $v \in V_{\alpha}(x^t)$ and $i\in I_{x^t}$ be arbitrarily fixed. Then, $g_i(x)\leq 0$ and 
$g_i(x^t)\geq 0$. Further considering the convexity of $g_i$, we obtain
\begin{equation} \label{eq-add}
\langle \nabla g_i(x^t), x-x^t\rangle \leq g_i(x^t) + \langle \nabla g_i(x^t), x-x^t\rangle  
\leq g_i(x)\leq 0.
\end{equation}
Since $v \in V_{\alpha}(x^t)$, we have $\alpha g_i(x^t) + \langle \nabla g_i(x^t), v\rangle \leq 0$. Then, by combining this with \eqref{eq-add} and \eqref{def:velocity}, we obtain the desired result.
\end{proof}

\section{CGM for Strongly Convex Optimization} \label{sec:min_f}
In this section, we present an enhanced convergence analysis of CGM for solving the optimization problem \eqref{pro:min_f(x)}, 
where $f$ is $\mu$-strongly convex and $\ell_f$-smooth. Compared with the results in \cite[Thm. 3]{ZHM25}, we 
remove the inappropriate requirement that $f$ is $L_f$-Lipschitz and further relax other assumptions.
Throughout this section, we make the following assumptions.
\begin{assumption} \label{ass:min_f(x)}
Assume that (i) $f$ is $\mu$-strongly convex and $\ell_f$-smooth, (ii) all $g_i$'s are convex and $\ell_g$-smooth,
and (iii) the feasible set $\mathcal{C}$ is nonempty. Here, $\ell_f \geq \mu > 0$ and $ \ell_g > 0$ are some constants. 
\end{assumption}

Note that under Assumption \ref{ass:min_f(x)}, problem \eqref{pro:min_f(x)} has a unique optimal solution, which we denote by $x^*$,
i.e., $x^* :=\arg\min_{x\in\mathcal{C}} f(x)$.  
We further let $x^{\star}:=\arg\min_{x\in\mathbb{R}^n} f(x)$ (the unique optimal solution of $f$ over $\mathbb{R}^n$) and denote the corresponding function value by $f_{\star} := f(x^\star)$. The existence of $x^{\star}$ is also guaranteed by Assumption \ref{ass:min_f(x)}.
Note that $f_{\star}$ is different from $f(x^*)$ and both are finite.

\begin{remark}  
As explained in Section \ref{sc:intro}, the strong convexity of $f$ is inconsistent with its $L_f$-Lipschitz continuity. 
In our analysis, we only assume that $f$ is strongly convex, but not $L_f$-Lipschitz.
Furthermore, we neither impose an $L_g$-Lipschitz condition on each constraint function $g_i$ nor require the feasible set $\mathcal{C}$ to be bounded. 
Note that all these conditions were assumed in \cite{ZHM25}. 
Thus, compared with the results in \cite{ZHM25}, our analysis is more rigorous and general.
In practice, the feasible set $\mathcal{C}$ can indeed be unbounded. Typical examples include linear inequality constraints $Ax\le b$ and exponential-decay bounds $e^{-x}\leq \bar{C}$, which arise in control systems. 
Even when $\mathcal{C}$ is bounded, it is nontrivial to obtain its diameter in advance---note that this diameter is explicitly required for the algorithmic construction in  \cite{ZHM25}.
\end{remark}

Our CGM for solving the minimization problem \eqref{pro:min_f(x)} (denoted CGM-Min) is presented in Algorithm \ref{alg:cgm-f}. 
Note that in Algorithm \ref{alg:cgm-f}, we define $I_{x} := \{ i \in [m] \mid g_i(x) > 0 \}$, which differs slightly from the definition in \eqref{def:velocity}. Specifically, our $I_x$ denotes the set of constraints violated at $x$, whereas that in \eqref{def:velocity} corresponds to active constraints. This change of definition stems from our refined analysis. Despite this minor difference, we retain the notation $I_x$ for simplicity.

\begin{algorithm}
\caption{Constrained Gradient Method for Minimization Problem (CGM-Min)}
\label{alg:cgm-f}
\begin{algorithmic}[1]
    \State Initialize $x^0 \in \mathcal{C}$, and set $\alpha$ such that $0<\alpha \leq \mu$. % and $\eta_t \leq 1/\ell_f$ satisfying $0<\alpha \eta_t\leq 1$.
    %Let $T\geq 1$ be any integer. 
\For{$t = 0, 1, 2, \ldots$}
    \State Construct the set of \textbf{violated constraints} at $x^t$ as:    
    
    \begin{minipage}[T]{0.45\textwidth}
    \vspace{-0.5em}
    \begin{equation*}
    \vspace{-0.2em} \hspace{5.2em} I_{x^t} = \{ i \in [m] \mid g_i(x^t) > 0 \}.
    \end{equation*}
    \end{minipage}
    \begin{minipage}[T]{0.45\textwidth}
    \vspace{0.8em}
    \raggedleft
    \textcolor{gray}{\# removed the auxiliary constraint.}
    \end{minipage}
    \vspace{1em}

    \State Construct the velocity polytope
    \begin{equation}\label{def-V-alpha}
        V_{\alpha}(x^t) = \{ v \in \mathbb{R}^n \mid \alpha g_i(x^t) + \nabla g_i(x^t)^\top v \leq 0, \; \forall i \in I_{x^t} \}.
    \end{equation}
    \State Solve the quadratic programming problem
    \begin{equation} \label{update-v-f}
        v^t = {\arg \min}_{v \in V_{\alpha}(x^t)}  \| v + \nabla f(x^t) \|^2.
    \end{equation}
    \State Choose $0<\eta_t \leq \min\{1/\ell_f, 1/\alpha\}$ and update the iterate 
    $x^{t+1} = x^{t} + \eta_t v^t$.    
\EndFor
\end{algorithmic}
\end{algorithm}

\begin{remark} \label{remark-alg}
We present the following remarks on Algorithm \ref{alg:cgm-f}.
\begin{enumerate}
    \item Unlike CGM in \cite{ZHM25}, we remove the auxiliary constraint $g_{m+1}(x)=\|x\|^2 - \widehat{D}^2\leq 0$. 
    This constraint was originally used to ensure the boundedness of sequences $\{x^t\}$ and $\{v^t\}$ in their theoretical analysis. 
    Additionally, we relax the assumptions on $\mathcal{C}$ by allowing it to be unbounded.

    \item Leveraging our refined analysis, we relax the active constraint set in \eqref{def:velocity} to the violated constraint set $I_{x^t} = \{ i \in [m] \mid g_i(x^t) > 0\}$. A beneficial byproduct of this relaxation is a reduction in gradient computations $\{\nabla g_i(x^t): i \in I_{x^t}\}$, owing to the smaller cardinality of our $I_{x^t}$. Notably, this relaxation does not affect the conclusion of Lemma \ref{lemma:depict_veloc_2}, as $\langle \nabla g_i(x^t), x-x^t \rangle\leq 0$ remains valid. To see this, for all  $i\in I_{x^t}$, since $g_i(x^t) > 0$, \eqref{eq-add} becomes
    $\langle \nabla g_i(x^t), x-x^t\rangle < g_i(x^t) + \langle \nabla g_i(x^t), x-x^t\rangle  \leq g_i(x)\leq 0$. 

    \item When $x^t \in \mathcal{C}$,  $I_{x^t} = \varnothing$ holds, in which case $V_{\alpha}(x^t)$ reduces to the entire space $\mathbb{R}^n$. 
\end{enumerate}
\end{remark}

The core idea of our analysis is to leverage function values to bound the magnitude of the projected velocity $\|v^t\|$. The corresponding lemmas are presented as follows.

\begin{lemma}[Bounding $\|v^t\|^2$ via function values] \label{lemma:control_v}
Under Assumption \ref{ass:min_f(x)} for Problem \eqref{pro:min_f(x)},  Algorithm \ref{alg:cgm-f} satisfies, for all $t\geq 0$, the following:
\begin{equation}\label{est-v-f}
\|v^t\|^2 \leq 4(2\ell_f-\alpha)(f(x^t)-f(x^*))+8\ell_f(f(x^{*})-f_{\star}).
\end{equation}
\end{lemma}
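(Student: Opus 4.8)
The plan is to exploit the variational characterization of $v^t$ as the Euclidean projection of $-\nabla f(x^t)$ onto $V_\alpha(x^t)$, compare $v^t$ against the particular feasible direction $\alpha(x^*-x^t)$ furnished by Lemma~\ref{lemma:depict_veloc_1}, and then translate the resulting geometric quantities ($\|x^t-x^*\|^2$, $\langle\nabla f(x^t),x^*-x^t\rangle$, $\|\nabla f(x^t)\|^2$) into function-value gaps using the $\mu$-strong convexity and $\ell_f$-smoothness of $f$. No boundedness of $\mathcal{C}$ or Lipschitzness of $f$ or the $g_i$'s will be needed.

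First I would note that $\alpha(x^*-x^t)\in V_\alpha(x^t)$: this is Lemma~\ref{lemma:depict_veloc_1}, which remains valid under the violated-constraint definition $I_{x^t}=\{i: g_i(x^t)>0\}$ adopted in Algorithm~\ref{alg:cgm-f} (as noted in Remark~\ref{remark-alg}, shrinking $I_{x^t}$ only enlarges the polytope, and the convexity estimate $g_i(x^t)+\langle\nabla g_i(x^t),x^*-x^t\rangle\le g_i(x^*)\le 0$ still applies). Since $v^t$ minimizes $\|v+\nabla f(x^t)\|^2$ over $V_\alpha(x^t)$ by \eqref{update-v-f}, substituting $v=\alpha(x^*-x^t)$, expanding both squared norms, and cancelling the common term $\|\nabla f(x^t)\|^2$ gives
\[
\|v^t\|^2+2\langle v^t,\nabla f(x^t)\rangle\;\le\;\alpha^2\|x^t-x^*\|^2+2\alpha\langle\nabla f(x^t),x^*-x^t\rangle .
\]
Next I would absorb the cross term by Young's inequality in the form $2ab\le\tfrac12 a^2+2b^2$ with $a=\|v^t\|$, $b=\|\nabla f(x^t)\|$, and move the resulting $\tfrac12\|v^t\|^2$ to the left, obtaining $\|v^t\|^2\le 2\alpha^2\|x^t-x^*\|^2+4\alpha\langle\nabla f(x^t),x^*-x^t\rangle+4\|\nabla f(x^t)\|^2$. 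Then $\mu$-strong convexity in the form $\langle\nabla f(x^t),x^*-x^t\rangle\le f(x^*)-f(x^t)-\tfrac\mu2\|x^t-x^*\|^2$ turns the quadratic terms into $2\alpha(\alpha-\mu)\|x^t-x^*\|^2$, which is $\le 0$ precisely because $0<\alpha\le\mu$; this is the one place where the parameter condition on $\alpha$ enters. What survives is $\|v^t\|^2\le 4\alpha\big(f(x^*)-f(x^t)\big)+4\|\nabla f(x^t)\|^2$.

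Finally I would invoke the standard smoothness consequence $\|\nabla f(x^t)\|^2=\|\nabla f(x^t)-\nabla f(x^\star)\|^2\le 2\ell_f\big(f(x^t)-f_\star\big)$ (using $\nabla f(x^\star)=0$), substitute, and rewrite $4\alpha\big(f(x^*)-f(x^t)\big)+8\ell_f\big(f(x^t)-f_\star\big)$ as $4(2\ell_f-\alpha)\big(f(x^t)-f(x^*)\big)+8\ell_f\big(f(x^*)-f_\star\big)$ by a purely algebraic regrouping, which yields exactly \eqref{est-v-f}. I do not expect a genuine obstacle here: the chain is short and every ingredient is elementary. The only point requiring care is the bookkeeping---picking the Young constant so the leftover $\|v^t\|^2$ term can be moved across, and tracking the coefficient of $\|x^t-x^*\|^2$ so that $\alpha\le\mu$ makes it nonpositive rather than leaving behind a term that, absent boundedness of $\mathcal{C}$, could not otherwise be controlled.
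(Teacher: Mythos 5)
Your argument is correct and follows essentially the same route as the paper's proof: both compare $v^t$ against the feasible direction $\alpha(x^*-x^t)$ from Lemma~\ref{lemma:depict_veloc_1}, use $\mu$-strong convexity together with $\alpha\le\mu$ to discard the $\|x^t-x^*\|^2$ term, and invoke $\|\nabla f(x^t)\|^2\le 2\ell_f(f(x^t)-f_\star)$ before regrouping. The only cosmetic difference is that you absorb the cross term via Young's inequality after cancelling $\|\nabla f(x^t)\|^2$, whereas the paper bounds $\|v^t+\nabla f(x^t)\|^2$ directly and then applies $\|v^t\|^2\le 2\|v^t+\nabla f(x^t)\|^2+2\|\nabla f(x^t)\|^2$; both yield the identical constants in \eqref{est-v-f}.
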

\begin{proof}
Let $t\geq 0$ be arbitrarily fixed. 
Recall that the optimal function value of $f$ over $\mathbb{R}^n$ is attained at $x^\star$, denoted $f_{\star}$. Given that $f$ is convex and $\ell_f$-smooth, we have
\begin{equation} \label{est_nabla_f}
f(x^t)-f_{\star} \geq \frac{1}{2\ell_f}\|\nabla f(x^t)\|^2.   
\end{equation}
From \eqref{update-v-f}, it follows that $\|v^t+\nabla f(x^t)\|\leq \|v+\nabla f(x^t)\|$ for all $v \in V_{\alpha}(x^t)$. Combining this with Lemma \ref{lemma:depict_veloc_1} and the fact that $x^* \in \mathcal{C}$, we obtain $\alpha(x^*\!-\!x^t) \in V_{\alpha}(x^t)$ and 
\begin{equation}\label{est-v-nabla-f}
\begin{aligned}
\|v^t+\nabla f(x^t)\|^2 & \leq \|\alpha(x^*\!-\!x^t)+\nabla f(x^t)\|^2 
= \alpha^2 \|x^*\!-\!x^t\|^2  +2\alpha \langle x^*\!-\!x^t,\nabla f(x^t)\rangle + \|\nabla f(x^t)\|^2\\
& \leq \alpha^2 \|x^*-x^t\|^2 + 2\alpha\big(f(x^*)-f(x^t)- (\mu/2) \|x^t-x^*\|^2\big) + \|\nabla f(x^t)\|^2 \\
& = \alpha(\alpha-\mu)\|x^t-x^*\|^2 + 2\alpha\big(f(x^*)-f(x^t)\big)+\|\nabla f(x^t)\|^2\\
& \leq 2\alpha(f(x^*)-f(x^t))+ 2\ell_f(f(x^t)-f_{\star})\\
%& =2\alpha(f(x^*)-f(x^t))+ 2\ell_f(f(x^t)-f(x^*)+f(x^*)-f_{\star})\\
& = 2(\ell_f-\alpha) (f(x^t)-f(x^*))+2\ell_f(f(x^{*})-f_{\star}),
\end{aligned}
\end{equation}
where the second inequality follows from the $\mu$-strong convexity of $f$, and the third from \eqref{est_nabla_f} and $\alpha\leq \mu$.
Since $\|v^t\|^2  \leq 2\big( \|v^t+\nabla f(x^t)\|^2 +  \|\nabla f(x^t)\|^2\big)$, combining \eqref{est-v-nabla-f} and \eqref{est_nabla_f} yields the desired result \eqref{est-v-f}.
\end{proof}

To further bound $\|v^t\|$, the only nontrivial term in \eqref{est-v-f} is $f(x^t)-f(x^*)$. This can be effectively controlled by setting appropriate step sizes $\eta_t$. We now introduce the following lemma, which establishes the boundedness of $\|v^t\|^2$ and $\|x^t - x^*\|$.

\begin{lemma}[Boundedness of $\|v^t\|$ and $\|x^t-x^*\|$] \label{lemma:bound-v-f}
Under Assumption \ref{ass:min_f(x)} for Problem \eqref{pro:min_f(x)},  Algorithm \ref{alg:cgm-f} satisfies, for all $t\geq 0$, the following:
\begin{align}
\|v^t\|  &\leq  C_1 := \sqrt{4(2\ell_f-\alpha)(f(x^0)-f(x^*))+8\ell_f(f(x^{*})-f_{\star})}, \label{Cv-w-u} \\
\|x^{t}-x^*\| & \leq C_{2} := \begin{pmatrix} \|\nabla f(x^*)\|+\sqrt{\|\nabla f(x^*)\|^2+2\mu\big(f(x^0)-f(x^*)\big)} \end{pmatrix} \Big/{\mu}.\label{cor:bound_x} 
\end{align}
\end{lemma}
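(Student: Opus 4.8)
The plan is to establish the two bounds by induction on $t$, proving them in tandem: the bound \eqref{Cv-w-u} on $\|v^t\|$ will follow from Lemma \ref{lemma:control_v} once we know $f(x^t) \le f(x^0)$, and the bound \eqref{cor:bound_x} on $\|x^t - x^*\|$ will follow from strong convexity once we know the same monotonicity of function values. So the real engine is a descent-type inequality: I would first show that $f(x^{t+1}) \le f(x^t)$ for every $t \ge 0$, which by induction gives $f(x^t) \le f(x^0)$ for all $t$, and then both displayed bounds drop out by substituting this into \eqref{est-v-f} and into the strong convexity inequality, respectively.

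For the descent step, I would start from $\ell_f$-smoothness of $f$, which gives
\[
f(x^{t+1}) \le f(x^t) + \langle \nabla f(x^t), x^{t+1}-x^t\rangle + \frac{\ell_f}{2}\|x^{t+1}-x^t\|^2 = f(x^t) + \eta_t \langle \nabla f(x^t), v^t\rangle + \frac{\ell_f \eta_t^2}{2}\|v^t\|^2.
\]
Since $\eta_t \le 1/\ell_f$, the last term is at most $\tfrac{\eta_t}{2}\|v^t\|^2$, so it suffices to show $\langle \nabla f(x^t), v^t\rangle + \tfrac12\|v^t\|^2 \le 0$, i.e. $\|v^t + \nabla f(x^t)\|^2 \le \|\nabla f(x^t)\|^2$. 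The natural way to get this is to observe that $0 \in V_\alpha(x^t)$ — indeed $v=0$ satisfies $\alpha g_i(x^t) + \nabla g_i(x^t)^\top 0 = \alpha g_i(x^t)$, but wait, for $i \in I_{x^t}$ we have $g_i(x^t) > 0$, so $v=0$ is \emph{not} in $V_\alpha(x^t)$ when $I_{x^t} \ne \varnothing$. This is exactly the main obstacle (see below). Instead I would use that $v^t$ is the minimizer in \eqref{update-v-f} together with Lemma \ref{lemma:depict_veloc_1}: $\alpha(x^* - x^t) \in V_\alpha(x^t)$, and then use the intermediate line of the computation \eqref{est-v-nabla-f}, namely $\|v^t + \nabla f(x^t)\|^2 \le \alpha(\alpha-\mu)\|x^t - x^*\|^2 + 2\alpha(f(x^*) - f(x^t)) + \|\nabla f(x^t)\|^2 \le 2\alpha(f(x^*) - f(x^t)) + \|\nabla f(x^t)\|^2$, using $\alpha \le \mu$. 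Since $x^t$ need not be feasible, $f(x^*) - f(x^t)$ can be positive, so this alone does not give the clean inequality $\|v^t + \nabla f(x^t)\|^2 \le \|\nabla f(x^t)\|^2$.

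So the heart of the argument — and the step I expect to be the main obstacle — is handling the case when $x^t \notin \mathcal{C}$, where the naive descent argument fails because $0 \notin V_\alpha(x^t)$ and $f(x^t)$ may be smaller than $f(x^*)$. The resolution I would pursue is to not insist on the potential $f(x^t) - f(x^*)$ but work instead with a Lyapunov function that incorporates the constraint violation, or alternatively to prove the weaker statement that $f(x^t) \le \max\{f(x^0), f(x^*)\} = f(x^0)$ directly by a more careful case analysis: when $x^t$ is already feasible, $V_\alpha(x^t) = \mathbb{R}^n$ so $v^t = -\nabla f(x^t)$ and a standard gradient-descent decrease applies; when $x^t$ is infeasible, combine the bound on $\|v^t + \nabla f(x^t)\|^2$ above with $\eta_t \le 1/\alpha$ to control how far $f$ can increase. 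I would aim to show that in either regime $f(x^{t+1}) \le f(x^t) + (\text{something that telescopes or is absorbed})$, ultimately yielding $f(x^t) \le f(x^0)$ for all $t$ — this is the claim that feeds both \eqref{Cv-w-u} via Lemma \ref{lemma:control_v} and \eqref{cor:bound_x} via $\tfrac{\mu}{2}\|x^t - x^*\|^2 \le f(x^t) - f(x^*) - \langle \nabla f(x^*), x^t - x^*\rangle \le f(x^0) - f(x^*) + \|\nabla f(x^*)\|\,\|x^t - x^*\|$ and then solving the resulting quadratic inequality in $\|x^t - x^*\|$, which produces exactly the expression $C_2$ in \eqref{cor:bound_x}.
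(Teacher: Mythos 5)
Your endgame matches the paper's exactly: once $f(x^t)-f(x^*)\le f(x^0)-f(x^*)$ is known, \eqref{Cv-w-u} follows from Lemma \ref{lemma:control_v} and \eqref{cor:bound_x} from solving the quadratic inequality $\tfrac{\mu}{2}\|x^t-x^*\|^2-\|\nabla f(x^*)\|\,\|x^{t}-x^*\|\le f(x^0)-f(x^*)$. The gap is in the middle: you never actually establish the bound on $f(x^t)$, and the obstacle you flag (infeasible $x^t$, $f(x^t)$ possibly below $f(x^*)$, $0\notin V_\alpha(x^t)$) is an artifact of insisting on monotone descent $f(x^{t+1})\le f(x^t)$. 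What you should prove instead is a contraction \emph{toward} $f(x^*)$,
\begin{equation*}
f(x^{t+1})-f(x^*)\;\le\;(1-\alpha\eta_t)\bigl(f(x^t)-f(x^*)\bigr),
\end{equation*}
for which you already have every ingredient. Plug $x^{t+1}=x^t+\eta_t v^t$ into $\ell_f$-smoothness, rewrite $\eta_t\langle\nabla f(x^t),v^t\rangle=\tfrac{\eta_t}{2}\|v^t+\nabla f(x^t)\|^2-\tfrac{\eta_t}{2}\|\nabla f(x^t)\|^2-\tfrac{\eta_t}{2}\|v^t\|^2$, and insert your own intermediate bound $\|v^t+\nabla f(x^t)\|^2\le 2\alpha\bigl(f(x^*)-f(x^t)\bigr)+\|\nabla f(x^t)\|^2$: the $\|\nabla f(x^t)\|^2$ terms cancel, $\eta_t\le 1/\ell_f$ absorbs the $\|v^t\|^2$ term, and what remains is exactly the displayed contraction.

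This inequality holds regardless of the sign of $f(x^t)-f(x^*)$, i.e., regardless of whether $x^t$ is feasible. Iterating it, and using $0\le 1-\alpha\eta_i\le 1$ together with $f(x^0)-f(x^*)\ge 0$ (because $x^0\in\mathcal{C}$), gives $f(x^t)-f(x^*)\le\prod_{i}(1-\alpha\eta_i)\,\bigl(f(x^0)-f(x^*)\bigr)\le f(x^0)-f(x^*)$ --- no case analysis on feasibility and no Lyapunov term for the constraint violation is needed. This is precisely \eqref{fn-value-shrink}--\eqref{pi-f} in the paper (proved there for arbitrary $x\in\mathcal{C}$ in place of $x^*$). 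As written, your proposal stops at ``I would aim to show \dots something that telescopes or is absorbed,'' so the central step is missing, even though it is one substitution away from the bound you already derived.
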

\begin{proof}
Let $t\geq 0$ and $x\in \mathcal{C}$ be arbitrarily fixed. 
It follows from the update rule $x^{t+1} = x^{t} + \eta_t v^t$ that
\begin{align}
f(x^{t+1}) & \leq f(x^{t})+\eta_t \langle \nabla f(x^{t}), v^{t}\rangle+\frac{\ell_{f}}{2} \eta_t^{2}\left\|v^{t}\right\|^{2} \nonumber \\
&=f(x^{t})+\frac{\eta_t}{2}\left\|v^{t}+\nabla f(x^{t})\right\|^{2}-\frac{\eta_t}{2}\left\|\nabla f(x^{t})\right\|^{2}-\frac{\eta_t}{2}\left(1-\eta_t \ell_{f}\right)\left\|v^{t}\right\|^{2} \nonumber \\
&\leq f(x^{t})+\frac{\eta_t}{2}\left\|\alpha(x-x^{t})+\nabla f(x^{t})\right\|^{2}-\frac{\eta_t}{2}\left\|\nabla f(x^{t})\right\|^{2}-\frac{\eta_t}{2}\left(1-\eta_t \ell_{f}\right)\left\|v^{t}\right\|^{2} \nonumber \\
&=f(x^{t})+\frac{\eta_t}{2} \alpha^{2}\left\|x-x^{t}\right\|^{2}+\alpha \eta_t \langle\nabla f(x^{t}),x-x^{t}\rangle-\frac{\eta_t}{2}\left(1-\eta_t \ell_{f}\right)\left\|v^{t}\right\|^{2} \nonumber \\
&\leq f(x^{t})-\alpha\eta_t(f(x^t)-f(x))+\frac{\eta_t\alpha}{2}(\alpha-\mu)\left\|x-x^{t}\right\|^{2}-\frac{\eta_t}{2}\left(1-\eta_t \ell_{f}\right)\left\|v^{t}\right\|^{2}, \label{jy-1}
\end{align}
where the first inequality follows from the $\ell_f$-smoothness of $f$,
the second from Lemma \ref{lemma:depict_veloc_1},
and the third from $\langle\nabla f(x^{t}),x-x^{t}\rangle \leq f(x) - f(x^t) - (\mu/2)\|x-x^t\|^2$, which is due to
the $\mu$-strong convexity of $f$. 
Further considering $\alpha\leq \mu$ and $\eta_t \leq 1/\ell_f$, we obtain from \eqref{jy-1} that
\begin{align}\label{fn-value-shrink}
f(x^{t+1})-f(x) \leq (1-\alpha\eta_t)\big(f(x^{t})-f(x)\big). 
\end{align}
Since $ 0 < \alpha \eta_t \leq 1$, recursive application of \eqref{fn-value-shrink} yields
\begin{align}\label{pi-f} 
f(x^{t+1})-f(x)   \leq \big(\Pi_{i=0}^{t}(1-\alpha\eta_i)\big) \big(f(x^{0})-f(x)\big) \leq f(x^{0})-f(x). 
\end{align}
Note that $\alpha \leq \mu \leq \ell_f$, so $2\ell_f - \alpha \geq \ell_f > 0$.
Substituting $x=x^*$ into \eqref{pi-f} and using \eqref{est-v-f}, we derive the boundedness result for $\|v^t\|$ given in \eqref{Cv-w-u}. 
Moreover, by again utilizing the $\mu$-strong convexity of $f$ and the Cauchy-Schwarz inequality, we have
\[
f(x^{t})-f(x^*)\geq \langle \nabla f(x^{*}),x^{t}-x^*\rangle+\frac{\mu}{2}\|x^t-x^*\|^2\geq -\|\nabla f(x^{*})\|\|x^{t}-x^*\|+\frac{\mu}{2}\|x^t-x^*\|^2.
\]
Further considering \eqref{pi-f}, this implies $\frac{\mu}{2}\|x^t-x^*\|^2-\|\nabla f(x^{*})\|\|x^{t}-x^*\|\leq f(x^0)-f(x^*)$, from which the desired result \eqref{cor:bound_x} follows. 
\end{proof}

Lemma \ref{lemma:bound-v-f} implies that the sequence $\{x^t\}$ generated by Algorithm \ref{alg:cgm-f}  is contained in the bounded region $B(x^*,C_{2})$, 
a closed ball centered at $x^*$ 
with radius $C_2>0$ defined in \eqref{cor:bound_x}.
This boundedness guarantee allows us to remove the assumption that all $g_i$'s are $L_g$-Lipschitz continuous. 
With a slight abuse of notation, we define $L_g$ hereafter as 
\begin{equation}\label{def-Lg}
    L_g:=\max \{ \|\nabla g_i(x)\|: x\in B(x^*,C_{2}), \, i \in [m]\}.
\end{equation}
We are now ready to present the main results in this section. 

\begin{thom} \label{thom:f}
Consider Problem \eqref{pro:min_f(x)} under Assumption \ref{ass:min_f(x)}. 
Let $\kappa:=\ell_f/\mu\geq 1$ denote the condition number of $f$. 
Recall that the optimality gap $\mathcal{G}(\cdot)$ is defined in Definition \ref{weak-e-sol} and the constant $C_1$ is given in \eqref{Cv-w-u}. 
Then, for Algorithm \ref{alg:cgm-f} with $\alpha=\mu$, the following convergence results hold:
\begin{enumerate}
    \item \textit{Constant step size}: Let $T\geq 1$ be any integer satisfying $T\geq \kappa \log T$ and $\eta_t = \eta := \log T/(\mu T)$ for all $t=0,1,\ldots, T-1$. 
    Then, Algorithm \ref{alg:cgm-f} achieves 
 
\begin{tabular}{@{\hspace{-0.2em}}p{4.5cm}l@{}}
    (Function Value Residual) & $f(x^T)-f(x^*)\leq (f(x^0)-f(x^*))/T  \sim \mathcal{O}(1/T)$, \smallskip \\  
    (Optimality Gap) & $\mathcal{G}(x^{T})\leq  (f(x^0)-f(x^*))/T  \sim \mathcal{O}(1/T)$, \smallskip  \\
    (Feasibility) & $g_i(x^{T})\leq \dfrac{C_{1}}{\mu}\max\left\{\dfrac{C_{1}\ell_g}{2\mu}, L_g\right\}\dfrac{\log T}{T}\sim  \tilde{\mathcal{O}}(1/T), \; \forall i \in [m]$.
\end{tabular}
    \item \textit{Varying step size}: Let $\eta_t = 1/(\mu(t+\kappa))$ for any $t\geq 0$. Then, for all $t\geq 1$, Algorithm \ref{alg:cgm-f} achieves 
    
\begin{tabular}{@{\hspace{-0.2em}}p{4.5cm}l@{}}
    (Function Value Residual) & $f(x^t)-f(x^*)\leq \dfrac{ \kappa-1}{t+\kappa-1} \left(f(x^{0})-f(x^*)\right)\sim \mathcal{O}(1/t)$, \smallskip \\
    (Optimality Gap) & $\mathcal{G}(x^t)\leq \dfrac{ \kappa-1}{t+\kappa-1} \left(f(x^{0})-f(x^*)\right)\sim \mathcal{O}(1/t)$, \smallskip \\
    (Feasibility) & $g_i(x^{t+1})\leq \dfrac{2C_{1}}{\mu(t+\kappa+1)}\left(L_g+\dfrac{\ell_gC_{1}}{2\mu}\right) + \dfrac{\ell_g C_{1}^2\log t}{\mu^2(t+\kappa+1)}\sim \tilde{\mathcal{O}}(1/t), \; \forall i \in [m]$.
    \end{tabular}
\end{enumerate}
\end{thom}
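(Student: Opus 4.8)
The plan is to build everything on the one-step decrease already established in Lemma 3.4, namely
$f(x^{t+1}) - f(x) \le (1-\alpha\eta_t)\bigl(f(x^t) - f(x)\bigr)$ for any $x\in\mathcal{C}$, specialized to $x = x^*$ and $\alpha = \mu$. So the first step is to fix $\alpha=\mu$ and rewrite this as $f(x^{t+1})-f(x^*) \le (1-\mu\eta_t)\bigl(f(x^t)-f(x^*)\bigr)$. For the constant step size $\eta = \log T/(\mu T)$, iterating $T$ times gives $f(x^T)-f(x^*) \le (1-\log T/T)^T (f(x^0)-f(x^*)) \le e^{-\log T}(f(x^0)-f(x^*)) = (f(x^0)-f(x^*))/T$, using $1-s\le e^{-s}$; I should also check that the constraint $T\ge\kappa\log T$ is exactly what guarantees $\eta_t \le \min\{1/\ell_f, 1/\alpha\} = \min\{1/\ell_f, 1/\mu\} = 1/\ell_f$ (since $\kappa\ge1$), so that Lemma 3.4 actually applies along the whole trajectory. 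For the varying step size $\eta_t = 1/(\mu(t+\kappa))$, the factor is $1-\mu\eta_t = 1 - 1/(t+\kappa) = (t+\kappa-1)/(t+\kappa)$, so the telescoping product $\prod_{i=0}^{t-1}(i+\kappa-1)/(i+\kappa)$ collapses to $(\kappa-1)/(t+\kappa-1)$, giving the stated $\mathcal{O}(1/t)$ function-value bound; I must also verify $\eta_t \le 1/\ell_f$, which holds because $t+\kappa \ge \kappa = \ell_f/\mu$.

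The second step is the optimality gap. Since $\mathcal{G}(x^T) = \max_{x\in\mathcal{C}}\langle F(x), x^T - x\rangle$ with $F=\nabla f$, and $f$ is convex, for any $x\in\mathcal{C}$ we have $\langle\nabla f(x), x^T-x\rangle \le f(x^T)-f(x) \le f(x^T)-f(x^*)$ because $f(x)\ge f(x^*)$ for all feasible $x$. Taking the max over $x\in\mathcal{C}$ yields $\mathcal{G}(x^T)\le f(x^T)-f(x^*)$, so the optimality-gap bounds follow immediately from the function-value bounds — this step is essentially free and works identically in both the constant and varying cases.

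The third step — the feasibility bound — is where the real work and the main obstacle lie. The idea is to track how much $g_i$ can grow along one step and then sum the contributions, exploiting that $\|v^t\|\le C_1$ (Lemma 3.4) and that $x^t$ stays in the ball $B(x^*,C_2)$ so that $\|\nabla g_i(x^t)\|\le L_g$ as defined in \eqref{def-Lg}. By $\ell_g$-smoothness, $g_i(x^{t+1}) \le g_i(x^t) + \eta_t\langle\nabla g_i(x^t), v^t\rangle + \tfrac{\ell_g}{2}\eta_t^2\|v^t\|^2$. Now split on whether $i\in I_{x^t}$: if $i\in I_{x^t}$ then $v^t\in V_\mu(x^t)$ forces $\langle\nabla g_i(x^t),v^t\rangle \le -\mu g_i(x^t)$, giving $g_i(x^{t+1}) \le (1-\mu\eta_t)g_i(x^t) + \tfrac{\ell_g}{2}\eta_t^2 C_1^2$ — a contraction with a small additive error; if $i\notin I_{x^t}$ then $g_i(x^t)\le 0$ and we only pick up $g_i(x^{t+1}) \le \eta_t L_g C_1 + \tfrac{\ell_g}{2}\eta_t^2 C_1^2$. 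The delicate point is that the iterate may enter and leave the violated set repeatedly, so the clean argument is: let $s\le t$ be the last index with $i\notin I_{x^s}$ (if none, use $x^0\in\mathcal{C}$); then $g_i(x^{s+1})$ is bounded by the "re-entry" term, and from $s+1$ up to $t$ the index stays in $I_{x^\tau}$ so we apply the contraction recursively. For the constant step size this gives roughly $g_i(x^T) \lesssim \eta L_g C_1 (1-\mu\eta)^{\cdots} + \tfrac{\ell_g}{2}\eta^2 C_1^2\sum(1-\mu\eta)^k \le \eta L_g C_1 + \tfrac{\ell_g C_1^2}{2\mu}\eta$, and substituting $\eta=\log T/(\mu T)$ and bundling the two terms into $\tfrac{C_1}{\mu}\max\{\tfrac{C_1\ell_g}{2\mu}, L_g\}\tfrac{\log T}{T}$ yields the stated form. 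For the varying step size the sum $\sum_{\tau=s+1}^{t}\eta_\tau^2\prod(1-\mu\eta_j)$ must be estimated; using $\eta_\tau = 1/(\mu(\tau+\kappa))$ the partial products telescope and one gets a bound of order $\tfrac{\ell_g C_1^2}{\mu^2}\cdot\tfrac{\log t}{t+\kappa+1}$ for the accumulated quadratic error plus $\tfrac{2 L_g C_1}{\mu(t+\kappa+1)}$ for the re-entry term — matching the displayed expression after collecting constants. The main obstacle is handling the in-and-out behavior of the active set cleanly and getting the telescoping-product estimates for the varying step size to produce exactly the claimed $\log t/(t+\kappa+1)$ factor rather than something looser; I expect to need the elementary bound $\prod_{j=\tau+1}^{t}(1 - 1/(j+\kappa)) = (\tau+\kappa)/(t+\kappa)$ together with $\sum_{\tau} 1/((\tau+\kappa)(t+\kappa)) = O(\log t/(t+\kappa))$.
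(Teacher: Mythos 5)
Your proposal is correct and follows essentially the same route as the paper: the one-step contraction $f(x^{t+1})-f(x^*)\le(1-\mu\eta_t)(f(x^t)-f(x^*))$ for the function value and optimality gap, and for feasibility the same case split on $i\in I_{x^t}$ together with the "last index at which $g_i\le 0$" device (the paper's $\theta_i(t)$) and the telescoping product $\prod_{j}(1-1/(j+\kappa))$. The only cosmetic differences are that the paper handles the constant-step feasibility bound by induction on the claimed bound rather than by summing the geometric series, and bounds the re-entry step via convexity of $g_i$ at $x^{t+1}$ (giving $\eta L_gC_1$ without the extra $\tfrac{\ell_g}{2}\eta^2C_1^2$ term), neither of which changes the argument.
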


\begin{proof}
First, it is straightforward to verify that $0<\eta_t \leq \min\{1/\ell_f, 1/\alpha\}$ holds for both choices of $\eta_t$. 
Next, we prove the convergence rate results for the function value residual, followed by those for feasibility. 
Let $T\geq 1$ be any integer satisfying $T\geq \kappa \log T$ and $x\in\mathcal{C}$ be arbitrarily fixed.
For the function value residual, using the constant step size $\eta_t=\eta=\log T/(\mu T)$ for all $t=0,1,\ldots, T-1$, we have
\begin{equation*}
\begin{aligned}
\langle \nabla f(x),x^{T}-x\rangle 
& \leq  f(x^{T})-f(x)  \leq  f(x^{T})-f(x^{*}) \leq (1-\mu \eta)^{T}\left(f(x^0)-f(x^*)\right)  \\
& \leq \mathrm{e}^{-\mu\eta T}\left(f(x^0)-f(x^*)\right) = (f(x^0)-f(x^*)) / T,
\end{aligned}
\end{equation*}
where the first ``$\leq$'' follows from the convexity of $f$, the second is because $x\in\mathcal{C}$ and $f(x) \geq f(x^*)$, the third 
follows from $\alpha = \mu$ and \eqref{fn-value-shrink} with $x=x^*$. Since $x\in\mathcal{C}$ is arbitrarily taken, the above inequality further implies 
\[
\mathcal{G}(x^{T})=\max_{x\in \mathcal{C}}\langle \nabla f(x),x^{T}-x\rangle \leq (f(x^0)-f(x^*)) / T.
\]
When using the varying step size $\eta_t = 1/(\mu(t+\kappa))$, similarly, let $x\in\mathcal{C}$ and $t\geq 1$ be arbitrarily fixed yields
\begin{align*}
\langle \nabla f(x),x^{t}-x\rangle 
& \leq  f(x^{t})-f(x)    \leq \big(\Pi_{i=0}^{t-1}(1-\mu\eta_i)\big) \big(f(x^{0})-f(x)\big)  \\
& \leq \big(\Pi_{i=0}^{t-1}(1-\mu\eta_i)\big)  \big(f(x^{0})-f(x^*)\big) = \frac{ \kappa-1}{t+\kappa-1} \left(f(x^{0})-f(x^*)\right),
\end{align*}
where the first ``$\leq$'' follows from the convexity of $f$, the second ``$\leq$'' follows from the first inequality in \eqref{pi-f}, the third ``$\leq$'' is because $x\in\mathcal{C}$ and $f(x) \geq f(x^*)$, 
%$ =  \arg\min_{x\in \mathcal{C}} f(x)$, 
and the ``$=$'' is because $\eta_i=1/(\mu(i+\kappa))$ for $i\geq 0$.
Letting $x=x^*$ in the above inequality, we obtain
\[
f(x^t) - f(x^*)\leq \frac{ \kappa-1}{t+\kappa-1} \left(f(x^{0})-f(x^*)\right), \quad \forall t\geq 1.
\]
On the other hand, since $x\in\mathcal{C}$ is arbitrarily taken, we obtain 
\[
\mathcal{G}(x^t)=\max_{x\in \mathcal{C}}\langle \nabla f(x),x^t-x\rangle \leq \frac{ \kappa-1}{t+\kappa-1} \left(f(x^{0})-f(x^*)\right), \quad \forall t\geq 1.
\]

Next, we turn to the convergence rates of the feasibility. 
When using the constant step size $\eta_t=\eta=\log T/(\mu T)$ for all $t=0,1,\ldots, T-1$, we will prove that 
\begin{equation}\label{feas-rate}
    g_{i}(x^t)\leq \frac{C_{1}}{\mu}\max\left\{\frac{C_{1}\ell_g}{2\mu}, L_g\right\}\frac{\log T}{T} \text{~~for all~~} t=0,1,\ldots, T-1 \text{~~and~~} i\in [m],
\end{equation}
where $C_1$ is defined in \eqref{Cv-w-u}. %\eqref{cor:bound_x}.
To this end, we consider the following two cases.
\begin{itemize}%[leftmargin=*]
    \item[(a)] If $i \notin I_{x^{t}}$, then $
g_{i}(x^{t})\leq 0$. Following from the convexity of $g_i$, we have
\[
\begin{aligned}
g_{i}(x^{t+1})  \leq g_{i}(x^{t}) + \eta \nabla g_{i}(x^{t+1})^{\top} v^{t} 
 \leq\eta \|\nabla g_{i}(x^{t+1})\|\|v^{t}\|\leq \frac{C_{1} L_{g} \log T}{\mu T},
\end{aligned}
\]
where the last inequality follows from \eqref{Cv-w-u} and \eqref{def-Lg}. %Therefore, \eqref{feas-rate} holds.
\item[(b)] If $i \in I_{x^{t}}$, then
$g_{i}(x^{t}) > 0$. From \eqref{def-V-alpha}, we have $\eta \alpha g_{i}(x^{t})+\nabla g_{i}(x^{t})^{\top}\left(x^{t+1}-x^{t}\right) \leq 0$. Following from the $\ell_g$-smoothness of $g_i$, we have
\begin{equation}\label{thm3.1caseb}
g_{i}(x^{t+1})  \leq\left(1-\alpha \eta\right) g_{i}(x^{t})+\frac{\ell_{g}}{2} \eta^{2} \|v^t\|^2 
 \leq \left(1-\frac{\log T}{T}\right)g_{i}(x^{t}) +\frac{\ell_g C_{1}^2  (\log T)^2}{2\mu^2 T^2}.
\end{equation}
\end{itemize}
%\sout{Since $T\geq \kappa \log T$, we have $(\log T)/T \leq 1$. Moreover, since $x^0\in \mathcal{C}$, we have $g_i(x^0)\leq 0$ for all $i\in [m]$. }

Now we prove \eqref{feas-rate} by induction. 
Since $x^0\in \mathcal{C}$, we have $g_i(x^0)\leq 0$ for all $i\in [m]$. Thus, \eqref{feas-rate} holds for $t=0$.
Assume that \eqref{feas-rate} holds for some $t\ge 0$. For $(t+1)$,  in case (a), we have $g_{i}(x^{t+1}) \leq \frac{C_{1} L_{g} \log T}{\mu T}$, 
and in case (b), we have
\begin{equation*}
\begin{aligned}
g_{i}(x^{t+1}) &  \leq \left(1-\frac{\log T}{T}\right)g_{i}(x^{t}) +\frac{\ell_g C_{1}^2 (\log T)^2}{2\mu^2 T^2}\\
& \leq \left(1-\frac{\log T}{T}\right) \frac{C_{1}}{\mu}\max\left\{\frac{C_{1}\ell_g}{2\mu}, L_g\right\}\frac{\log T}{T}+\frac{\ell_g C_{1}^2  (\log T)^2}{2\mu^2 T^2} \\
& \leq \frac{C_{1}}{\mu}\max\left\{\frac{C_{1}\ell_g}{2\mu}, L_g\right\}\frac{\log T}{T},
\end{aligned}
\end{equation*}
where the first ``$\leq$" follows from \eqref{thm3.1caseb}, and the second is because the induction hypothesis. 
This proves \eqref{feas-rate}, and thus the desired result 
$g_i(x^{T})\leq \dfrac{C_{1}}{\mu}\max\left\{\dfrac{C_{1}\ell_g}{2\mu}, L_g\right\}\dfrac{\log T}{T} \sim  \tilde{\mathcal{O}}(1/T)$, $\forall i \in [m]$.

When using the varying step size $\eta_t=1/(\mu(t+\kappa))$, we analogously consider the following two cases. 
\begin{itemize}%[leftmargin=*]
    \item[(a)] If $i \notin I_{x^{t}}$, then $g_{i}(x^{t})\leq 0$. Following from the convexity of $g_i(x)$, we have
\begin{equation} 
\begin{aligned} \label{f-case-1}
g_{i}(x^{t+1}) & \leq g_i(x^t) + \nabla g_{i}(x^{t+1})^{\top} (x^{t+1}-x^{t}) \leq  \eta_{t} \nabla g_{i}(x^{t+1})^{\top} v^{t}  \\
&  \leq \eta_{t}\|\nabla g_{i}(x^{t+1})\|\|v^{t} \| \leq \frac{C_{1} L_{g}}{\mu(t+\kappa)}, \quad \forall i \notin I_{x^{t}}. 
\end{aligned}
\end{equation}
\item[(b)] If $i \in I_{x^{t}}$, then 
$g_{i}(x^{t}) > 0$.  Since $x^{t+1} = x^t + \eta_t v^t$ and $\alpha = \mu$, it follows from \eqref{def-V-alpha} that $\eta_{t} \mu g_{i}(x^{t})+\langle \nabla g_{i}(x^{t}), x^{t+1}-x^{t}\rangle \leq 0$. Further considering the $\ell_g$-smoothness of $g_i$, we obtain
\[
\begin{aligned}
g_{i}(x^{t+1})  \leq\left(1-\mu \eta_{t}\right) g_{i}(x^{t})+\frac{\ell_{g}}{2} \eta_{t}^{2} \|v^t\|^2 
 =\left(1-\frac{1}{t+\kappa}\right) g_{i}(x^{t})+\frac{\ell_g C_{1}^2}{2\mu^2(t+\kappa)^{2}}.
\end{aligned}
\]
Multiplying both sides by $(t+\kappa+1)$ and noting $(t+\kappa+1) \leq 2(t+\kappa)$, we obtain
\begin{equation}\label{f-recursion}
(t+\kappa+1)g_{i}(x^{t+1}) \leq \left(1-\frac{1}{t+\kappa}\right)(t+\kappa+1)g_{i}(x^{t})+\frac{\ell_g C_{1}^2}{\mu^2(t+\kappa)} 
 \leq (t+\kappa)g_{i}(x^{t})+\frac{\ell_g C_{1}^2}{\mu^2(t+\kappa)}.
\end{equation}
Define 
\begin{equation}\label{def-theta-i-t}
    \theta_{i}(t)=\max\{ s\in \{0,1,2,\ldots, t-1\} \mid g_i(x^s)\leq 0\}.
\end{equation}
Since $x^0\in \mathcal{C}$, it follows that $\theta_{i}(t)\geq 0$ for every $t\geq 1$. Moreover, since $g_i(x^t)>0$, we thus have $0\leq \theta_i(t+1)<t$, $g_i(x^{\theta_i(t+1)})\leq 0$ and $g_i(x^{\theta_i(t+1)+w})>0$ holds for $w=1,\ldots, t-\theta_i(t+1)$. In other words, we have $i \notin I_{x^{\theta_i(t+1)}}$ and $i \in I_{x^{s}}$ for any $s=\theta_i(t+1) + 1, \ldots, t$. Note that \eqref{f-recursion} holds for any $t$ as long as $i \in I_{x^t}$. Therefore, we have 
\[
(s+\kappa+1)g_{i}(x^{s+1})-(s+\kappa)g_{i}(x^{s}) \leq \frac{\ell_g C_{1}^2}{\mu^2(s+\kappa)}, 
\quad s=\theta_i(t+1) + 1,\ldots,t.
\]
Summing the above inequality from $s=\theta_i(t+1) + 1$ to $t$ yields
\[
\begin{aligned}
(t+\kappa+1)g_{i}(x^{t+1}) & \leq  (\theta_i(t+1)+1+\kappa)g_{i}(x^{\theta_i(t+1)+1})+ \sum_{s=\theta_i(t+1)+1}^{t} \frac{\ell_g C_{1}^2}{\mu^2(s+\kappa)}\\
& \leq \frac{C_{1} L_{g}(\theta_i(t+1)+1+\kappa)}{\mu(\theta_i(t+1)+\kappa)} +  \frac{\ell_g C_{1}^2}{\mu^2}\sum_{s=1}^{t} \frac{1}{s}\\
& \leq \frac{2 C_{1} L_{g}}{\mu} +  \frac{\ell_g C_{1}^2}{\mu^2}(1+\log t) = \frac{2C_{1}}{\mu}\left(L_g+\frac{\ell_g C_{1}}{2\mu}\right) + \frac{\ell_g C_{1}^2\log t}{\mu^2},
\end{aligned}
\]
where the second inequality uses \eqref{f-case-1}, since we have $i \notin I_{x^{\theta_i(t+1)}}$. 
Consequently, we obtain
\begin{equation} \label{f-case-2}
g_{i}(x^{t+1}) \leq \frac{2C_{1}}{\mu(t+\kappa+1)}\left(L_g+\frac{\ell_gC_{1}}{2\mu}\right) + \frac{\ell_g C_{1}^2\log t}{\mu^2(t+\kappa+1)}, \quad \forall i \in I_{x^{t}}. 
\end{equation}
\end{itemize}
Since the term on the far right-hand side of \eqref{f-case-1} is smaller than that of \eqref{f-case-2}, we conclude that 
\eqref{f-case-2} holds for all $i \in [m]$. This completes the proof of the theorem.
\end{proof}

\section{CGM for Strongly Monotone VIs} \label{sec:vi}

This section focuses on addressing the VI problem \eqref{pro-vi}, for which we introduce the following assumption.

\begin{assumption} \label{assmp:vi}
The operator $F: \mathbb{R}^n\rightarrow\mathbb{R}^n$ in \eqref{pro-vi} is continuous and $\mu$-strongly monotone over $\mathbb{R}^n$, and it satisfies the relaxed Lipschitz condition:
\begin{equation} \label{relax-con}
\|F(x)-F(y)\|^2\leq \ell_F^2\|x-y\|^2 + B, \quad  \forall  x,y\in \mathbb{R}^n,
\end{equation}
where $\ell_F\geq \mu$ and $B\geq 0$ constants. 
Furthermore, all $g_i$'s are convex and $\ell_g$-smooth over $\mathbb{R}^n$. The constraint set $\mathcal{C}$ in \eqref{pro-vi} is bounded, with its diameter defined as $D:=\sup_{x,y\in\mathcal{C}}\|x-y\|<\infty$. 
\end{assumption}

Rather than assuming $F$ is $L_F$-bounded over the entire space, we instead adopt the more relaxed condition given in \eqref{relax-con}, for which we provide the following remarks. 
\begin{enumerate}
\item[(i)] The use of weakened conditions traces back to stochastic gradient descent methods, where boundedness conditions such as
$\mathbb{E}_{\xi}\|{\nabla} f(x;\xi)\|^{2} \leq B$ or $\mathbb{E}_{\xi}\|s(x; \xi)\|^{2}\leq B$ are typically adopted, where $s(x; \xi)$ is an unbiased estimator of a subgradient. 
For the smooth case, Blum and Gladyshev \cite{B54, G65} proposed a relaxed condition 
$\mathbb{E}_{\xi}\|{\nabla} f(x;\xi)\|^{2} \leq L^{2}\|x\|^{2}+B$, which further implies the existence of $\hat{L}, \hat{B} >0$ such that
$\mathbb{E}_{\xi}\|{\nabla} f(x;\xi)-{\nabla} f(x^{\star})\|^{2} \leq \hat{L}^{2}\|x-x^{\star}\|^{2}+\hat{B}$—a form analogous to that of \eqref{relax-con}.
For the nonsmooth case, \cite{G19} assumed $\mathbb{E}_{\xi}\|s(x; \xi)\|^{2} \leq B+L\left(f(x)-f_{\star}\right)$. 
\item[(ii)] In our case, condition \eqref{relax-con} is different from \cite[Eq. (1.3)]{DQM24} and \cite[Eq. (4)]{JNT11}: the latter 
control $\|F(x)-F(y)\|$ (rather than its square) and only requires satisfaction for $x,y \in \mathcal{X}$, where $\mathcal{X}$ is a simple convex compact set 
admitting easy projection. Additional forms of relaxed conditions are available in \cite{KR23, AYS25} and related works. 

\item[(iii)] When $B=0$, condition \eqref{relax-con} reduces to the $\ell_F$-Lipschitz continuity of $F$ over $\mathbb{R}^n$. Thus, for $B>0$, this assumption 
is less restrictive than requiring $F$ to be both $\ell_F$-Lipschitz and $L_F$-bounded. Furthermore,  it is consistent with the strong monotonicity of
$F$ when $\ell_F \geq \mu$. 
\end{enumerate}
We retain the notation $\kappa:=\ell_F/\mu\geq 1$ in this section. Our CGM for solving the strongly monotone VI problem \eqref{pro-vi} is presented in Algorithm \ref{alg:cgm-op}.

\begin{algorithm}[!hbpt]
\caption{Constrained Gradient Method for strongly monotone VIs (CGM-VI)}
\label{alg:cgm-op}
\begin{algorithmic}[1]
    \State Initialize $x^0 \in \mathcal{C}$, set parameters: $\alpha=\mu$,  $\eta_t = \frac{1}{\mu(t+16\kappa^2)}$ for all $t\geq 0$ and $\Delta\geq \max\left\{1,\frac{D^2\ell_F^2}{\|F(x^0)\|^2+B}\right\}$. 
\For{$t = 0, 1, 2, \ldots $}
    \State Construct the set of \textbf{violated constraints} as $I_{x^t} = \{ i \in [m+1] \mid g_i(x^t) > 0 \}$, where for $i=m+1$, the  
    auxiliary constraint function is defined as $g_{m+1}(x)=\|x-x^0\|^2-\frac{\Delta}{\ell_F^2}(\|F(x^0)\|^2+B)$.
    \State Construct the velocity polytope
    \begin{equation*} 
        V_{\alpha}(x^t) = \{ v \in \mathbb{R}^n \mid \alpha g_i(x^t) + \nabla g_i(x^t)^\top v \leq 0, \; \forall  i \in I_{x^t} \}.
    \end{equation*}
    \State Solve the quadratic program
    \begin{equation} \label{update-v}
        v^t = {\arg \min}_{v \in V_{\alpha}(x^t)}   \| v + {F(x^t)} \|^2.
    \end{equation}
    \State Update the iterate $x^{t+1} = x^{t} + \eta_t v^t$.
\EndFor
\end{algorithmic}
\end{algorithm}
\begin{remark}
Given the choice of $\Delta\geq \max\left\{1,\frac{D^2\ell_F^2}{\|F(x^0)\|^2+B}\right\}$ and the definition of the auxiliary constraint function $g_{m+1}$, we can deduce that $g_{m+1}(x)\leq 0$ holds for all $x\in \mathcal{C}$. In the extreme case where $\|F(x^0)\|=0$ and $B=0$, a zero denominator can be avoided by selecting a different initial point.
\end{remark}

Next, we present a key lemma that underpins our main convergence results.

\begin{lemma}[Boundedness of $\|x^t-x^0\|$ and $\|v^t\|$] \label{lemma:op-bound-x-v}
Under Assumption \ref{assmp:vi} for Problem \eqref{pro-vi},  Algorithm \ref{alg:cgm-op} satisfies, for all $t\geq 0$, the following:
\begin{subequations} \label{op-bound-x-v}
\begin{align}
\|x^t-x^0\| & \leq C_{3} := \sqrt{\big(2\Delta + 5/4\big)(\|F(x^0)\|^2+B) / \ell_F^2},\label{op-bound-x-v-1}\\
\|v^t\| & \leq C_{4} := \sqrt{(16\Delta+20)(\|F(x^0)\|^2+B)}.\label{op-bound-x-v-2}
\end{align}
\end{subequations}
\end{lemma}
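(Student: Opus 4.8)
The plan is to prove the bound on $\|x^t - x^0\|$ first by exploiting the auxiliary constraint $g_{m+1}$, and then to deduce the bound on $\|v^t\|$ from it via the optimality of $v^t$ in \eqref{update-v} together with Lemma \ref{lemma:depict_veloc_1}. For the first bound, the key observation is that whenever $x^t$ strays outside the ball defined by $g_{m+1}(x)\le 0$, the index $m+1$ enters the violated set $I_{x^t}$, and the velocity-polytope constraint $\alpha g_{m+1}(x^t)+\nabla g_{m+1}(x^t)^\top v^t\le 0$ forces the move $v^t$ to point inward. Concretely, with $g_{m+1}(x)=\|x-x^0\|^2 - \frac{\Delta}{\ell_F^2}(\|F(x^0)\|^2+B)$ we have $\nabla g_{m+1}(x^t)=2(x^t-x^0)$, so the constraint reads $\alpha\big(\|x^t-x^0\|^2 - \frac{\Delta}{\ell_F^2}(\|F(x^0)\|^2+B)\big) + 2\langle x^t-x^0, v^t\rangle \le 0$. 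Using the $\ell_g$-smoothness of $g_{m+1}$ (here the relevant smoothness constant is simply $2$, or is absorbed into $\ell_g$), one gets a recursion of the form $g_{m+1}(x^{t+1}) \le (1-\alpha\eta_t)g_{m+1}(x^t) + \frac{\ell_g}{2}\eta_t^2\|v^t\|^2$ whenever $m+1\in I_{x^t}$; when $m+1\notin I_{x^t}$ the quantity $g_{m+1}(x^{t+1})$ is controlled by a single step of size $\eta_t\|v^t\|\cdot\|\nabla g_{m+1}(x^{t+1})\|$, paralleling cases (a)/(b) in the proof of Theorem \ref{thom:f}. The difficulty, and the reason the constants $16\kappa^2$ and the slack factors $2\Delta+5/4$ appear, is that this is a coupled estimate: the recursion for $g_{m+1}$ involves $\|v^t\|^2$, while the bound for $\|v^t\|$ will in turn involve $\|x^t-x^0\|$.

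I would therefore set up a simultaneous induction: assume $\|x^s-x^0\|\le C_3$ for all $s\le t$, derive from it (via the $v^t$-optimality argument below) that $\|v^s\|\le C_4$ for $s\le t$, and then feed $\|v^t\|\le C_4$ back into the $g_{m+1}$-recursion to close the induction at step $t+1$. For the $\|v^t\|$ estimate, since $\alpha(x^\star_{\mathcal C}-x^t)\in V_\alpha(x^t)$ is not directly available (no minimizer is singled out here), I would instead use that $\mathbf{0}\in V_\alpha(x^t)$ is false in general — rather, pick any fixed $\bar x\in\mathcal C$ (e.g. $x^0$ itself, which lies in $\mathcal C$) and note $\alpha(\bar x - x^t)\in V_\alpha(x^t)$ by Lemma \ref{lemma:depict_veloc_1}. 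Optimality of $v^t$ then gives $\|v^t+F(x^t)\|\le \|\alpha(\bar x-x^t)+F(x^t)\|\le \alpha\|\bar x-x^t\|+\|F(x^t)\|$, hence $\|v^t\|\le 2\|F(x^t)\|+\alpha\|\bar x - x^t\|$. Now $\|F(x^t)\|\le \|F(x^0)\| + \|F(x^t)-F(x^0)\| \le \|F(x^0)\| + \sqrt{\ell_F^2\|x^t-x^0\|^2+B}$ by the relaxed Lipschitz condition \eqref{relax-con}, and with $\bar x = x^0$ the term $\alpha\|\bar x - x^t\| = \mu\|x^t - x^0\|$; substituting the inductive bound $\|x^t-x^0\|\le C_3$ and using $\mu\le\ell_F$ and $\ell_F^2 C_3^2 = (2\Delta+5/4)(\|F(x^0)\|^2+B)$ yields, after bounding $\sqrt{a+b}\le\sqrt a+\sqrt b$ and collecting terms, exactly $\|v^t\|\le C_4 = \sqrt{(16\Delta+20)(\|F(x^0)\|^2+B)}$.

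The remaining work is to verify that the $g_{m+1}$-recursion, fed with $\|v^t\|^2\le C_4^2 = (16\Delta+20)(\|F(x^0)\|^2+B)$ and the step sizes $\eta_t = 1/(\mu(t+16\kappa^2))$, keeps $g_{m+1}(x^{t+1})$ small enough that $\|x^{t+1}-x^0\|^2 = g_{m+1}(x^{t+1}) + \frac{\Delta}{\ell_F^2}(\|F(x^0)\|^2+B) \le C_3^2$. In case $m+1\in I_{x^t}$, the contraction factor $(1-\alpha\eta_t)$ together with the additive term $\frac{\ell_g}{2}\eta_t^2 C_4^2$ — here one must track the value of $\ell_g$ for $g_{m+1}$, namely $2$ — gives, after a telescoping/summation argument of the same style as in \eqref{f-recursion}–\eqref{f-case-2} (using $\theta_{m+1}(t)$, the last index before $t$ at which the constraint was satisfied), a bound $g_{m+1}(x^{t+1}) \le \frac{C_4\|\nabla g_{m+1}\|_{\max}}{\mu(t+16\kappa^2)} + \frac{\ell_g C_4^2 (1+\log t)}{\mu^2(t+16\kappa^2)}$; in case $m+1\notin I_{x^t}$ a single-step bound of the form $\eta_t\|\nabla g_{m+1}(x^{t+1})\|\,\|v^t\|$ suffices. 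The point of choosing the shift $16\kappa^2$ in the step size (rather than $\kappa$ as in Theorem \ref{thom:f}) is precisely to make the denominator $\mu(t+16\kappa^2)\ge 16\mu\kappa^2 = 16\ell_F^2/\mu$ large enough at $t=0$ that these two terms are bounded by $\frac{1}{4}(\|F(x^0)\|^2+B)/\ell_F^2$, so that $g_{m+1}(x^{t+1})\le \frac54(\|F(x^0)\|^2+B)/\ell_F^2 - \frac{\Delta}{\ell_F^2}(\|F(x^0)\|^2+B) + \cdots$ — more precisely, so that the slack $5/4$ in the definition of $C_3^2 = (2\Delta + 5/4)(\|F(x^0)\|^2+B)/\ell_F^2$ absorbs all accumulated error. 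I expect this last bookkeeping — carefully matching the logarithmic and $1/t$ terms against the constant slack, and handling the base case $t=0$ where $x^0\in\mathcal C$ gives $g_{m+1}(x^0)\le 0$ — to be the main obstacle, as it is where the specific numerical constants are forced; the structural arguments (Lemmas \ref{lemma:depict_veloc_1}–\ref{lemma:depict_veloc_2}, the $v^t$-optimality bound, the case split on whether the auxiliary constraint is violated) are routine adaptations of the optimization-case proof.
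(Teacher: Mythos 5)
Your high-level skeleton matches the paper's (induct on $t$, split on whether the auxiliary constraint $g_{m+1}$ is violated, and bound $\|v^t\|$ via the optimality of \eqref{update-v} together with $\alpha(x^0-x^t)\in V_\alpha(x^t)$ from Lemma \ref{lemma:depict_veloc_1}), but the two quantitative steps you leave as "bookkeeping" are exactly where the argument fails as proposed. First, the triangle-inequality route $\|v^t\|\le 2\|F(x^t)\|+\mu\|x^t-x^0\|$ does not recover $C_4$: substituting $\|x^t-x^0\|\le C_3$ gives roughly $\|v^t\|^2\le (30\Delta+43)(\|F(x^0)\|^2+B)$, not $(16\Delta+20)(\|F(x^0)\|^2+B)$. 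The paper instead works with squared norms throughout: expanding $\|\alpha(x^0-x^t)+F(x^t)\|^2$ and using strong monotonicity plus Young's inequality yields $\|v^t+F(x^t)\|^2\le \|F(x^0)\|^2+\|F(x^t)\|^2$, and then \eqref{relax-con} gives the clean unconditional inequality $\|v^t\|^2\le 8\ell_F^2\|x^t-x^0\|^2+10(\|F(x^0)\|^2+B)$, which evaluates exactly to $C_4^2$ at $\|x^t-x^0\|=C_3$. (This also means no simultaneous induction is needed: the $v$-bound in terms of $x$ holds for every $t$ outright, and the induction runs on $\|x^t-x^0\|$ alone.)

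Second, and more seriously, the telescoping argument with $\theta_{m+1}(t)$ and the harmonic sum cannot close the induction. Your bound $g_{m+1}(x^{t+1})\lesssim \frac{\ell_g C_4^2(1+\log t)}{\mu^2(t+16\kappa^2)}$ must be compared against the available slack $(\Delta+5/4)(\|F(x^0)\|^2+B)/\ell_F^2$; at $t\sim 16\kappa^2$ the ratio is of order $1+\log(16\kappa^2)$, which exceeds $1$ and grows with $\kappa$, so the constant $5/4$ cannot absorb the accumulated error. The paper avoids telescoping entirely in the violated case: it uses the exact identity $\|x^{k+1}-x^0\|^2=\|x^k-x^0\|^2+2\eta_k\langle x^k-x^0,v^k\rangle+\eta_k^2\|v^k\|^2$, substitutes the polytope constraint $\mu\,g_{m+1}(x^k)+2\langle x^k-x^0,v^k\rangle\le 0$ for the cross term, and checks the single-step invariance $\big(1-\mu\eta_k/2\big)(2\Delta+5/4)+\mu\eta_k\Delta+10\eta_k^2\ell_F^2\le 2\Delta+5/4$, which holds because $\eta_k\le\mu/(16\ell_F^2)$ makes the per-step additive noise at most the fraction $\mu\eta_k/2$ of the slack that the contraction frees up. No logarithms appear. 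In the non-violated case the paper likewise needs no gradient of $g_{m+1}$, just $\|x^{k+1}-x^0\|^2\le\tfrac43\|x^k-x^0\|^2+4\eta_k^2\|v^k\|^2$ and the smallness of $\eta_k$. You would need to replace your telescoping step with such a one-step contraction argument for the stated constants to come out.
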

\begin{proof}
For any $x\in \mathcal{C}$, by Lemma \ref{lemma:depict_veloc_1} and the definition of $v^t$, we have 
\begin{align}\label{lem41a}
\left\|v^{t}+F(x^t)\right\|^{2} 
& \leq \left\|\alpha(x-x^t)+F(x^t)\right\|^{2} = \alpha^2 \|x-x^t\|^2 +2\alpha \langle x-x^t, F(x^t)\rangle + \|F(x^t)\|^2 \nonumber  \\
& = \alpha^2 \|x-x^t\|^2   +2\alpha \langle x-x^t, F(x^t)-F(x)\rangle + 2\alpha\langle x-x^t, F(x)\rangle + \|F(x^t)\|^2 \nonumber\\
& \leq (\alpha^2-2\mu\alpha)\|x-x^t\|^2 + 2\alpha\langle x-x^t, F(x)\rangle + \|F(x^t)\|^2 \nonumber\\ 
& \leq -\mu^2\|x-x^t\|^2 + 2\mu \Big(\frac{1}{2\mu}\|F(x)\|^2+\frac{\mu}{2}\|x-x^t\|^2\Big) + \|F(x^t)\|^2 \nonumber\\
& = \|F(x)\|^2 + \|F(x^t)\|^2,  
\end{align}
where the second inequality follows from the $\mu$-strong monotonicity of $F$, and the third inequality follows from 
Young's inequality and the fact that $\alpha=\mu$.
Using the inequality $\|a\|^2 \leq 2(\|a-b\|^2 + \|b\|^2)$ (which holds for any vectors $a$ and $b$ of the same dimension), we further derive
\begin{equation*}
\begin{aligned}
\left\|v^{t}\right\|^{2} &  \leq 2\|v^{t}+F(x^t)\|^2 + 2\|F(x^t)\|^2 \stackrel{\eqref{lem41a}}{\leq} 2\|F(x)\|^2 + 4\|F(x^t)\|^2\\
& \leq 10\|F(x)\|^2 + 8\|F(x^t)-F(x)\|^2 \stackrel{\eqref{relax-con}}{\leq} 10\|F(x)\|^2 + 8\ell_F^2\|x^t-x\|^2 + 8B\\
& \leq 8\ell_F^2\|x^t-x\|^2 + 10(\|F(x)\|^2+B).
\end{aligned} 
\end{equation*}
Since the above inequality holds for any $x\in \mathcal{C}$ and $x^0\in \mathcal{C}$, by letting $x=x^0$ we derive
\begin{equation}\label{control-v-x}
\left\|v^{t}\right\|^{2}  \leq 8\ell_F^2\|x^t-x^0\|^2 + 10(\|F(x^0)\|^2+B).    
\end{equation}
Since $\eta_t = \frac{1}{\mu(t+16\kappa^2)}$ for all $t\geq 0$ and $\kappa = \ell_F/\mu$, we have 
\begin{equation} \label{control-stepsize}   
\eta_t \leq \mu/ (16\ell_F^2) \text{~~for all~~}  t\geq 0.
\end{equation}
We now prove \eqref{op-bound-x-v-1} by induction. First, \eqref{op-bound-x-v-1} clearly holds for $t=0$. Assuming it holds for some 
$t=k\geq 0$, we will then show that it also holds for $t=k+1$. Consider the following two cases:
\begin{enumerate}[label=(\roman*), ref=(\roman*)]%[leftmargin=*]
    \item  \label{case1}  $\|x^k-x^0\|^2\leq \frac{\Delta}{\ell_F^2}(\|F(x^0)\|^2+B)$, i.e., $g_{m+1}(x^k)\leq 0$;
    \item \label{case2} $\frac{\Delta}{\ell_F^2}(\|F(x^0)\|^2+B) < \|x^k-x^0\|^2\leq \big(2\Delta + 5/4\big)(\|F(x^0)\|^2+B) / \ell_F^2$.
\end{enumerate}
Using the inequality $\|a+b\|^2\leq \frac{4}{3}\|a\|^2+4\|b\|^2$ and the update rule $x^{k+1} = x^k + \eta_k v^k$, in case (i), we have
\begin{align}
    \|x^{k+1}-x^{0}\|^2 & \leq \frac{4}{3}\|x^{k}-x^{0}\|^2 + 4\eta_{k}^2\|v^k\|^2 \stackrel{\eqref{control-v-x}}\leq \Big(\frac{4}{3}+32\ell_F^2\eta_k^2\Big)\|x^{k}-x^0\|^2+40\eta_k^2(\|F(x^0)\|^2+B) \nonumber \\
    & \stackrel{\eqref{control-stepsize},\, \ref{case1}}\leq \Big(\frac{4}{3}+\frac{\mu^2}{8\ell_F^2}\Big)\frac{\Delta}{\ell_F^2}(\|F(x^0)\|^2+B) + \frac{5\mu^2}{32\ell_F^4} (\|F(x^0)\|^2+B)  \nonumber \\
    & \leq  \Big(\frac{4}{3}+\frac{1}{8} + \frac{5}{32}\Big)\frac{\Delta}{\ell_F^2}(\|F(x^0)\|^2+B)  
    \leq \frac{2\Delta}{\ell_F^2}(\|F(x^0)\|^2+B), \label{op-case-1-x}
\end{align}
where the second-to-last inequality follows from $\kappa\geq 1$ and $\Delta\geq 1$.
On the other hand, in case (ii), we have $g_{m+1}(x^k) = \|x^{k}-x^{0}\|^2-\Delta(\|F(x^0)\|^2+B) / \ell_F^2 > 0$, i.e., $(m+1) \in I_{x^k}$. 
By the definitions of $g_{m+1}$, $V_{\alpha}(x^k)$ and $v^k \in V_{\alpha}(x^k)$, we have 
$\mu\big(\|x^{k}-x^{0}\|^2- \Delta(\|F(x^0)\|^2+B)/\ell_F^2 \big)+2(x^k-x^0)^\top v^k \leq 0$.
Using the updating rule $x^{k+1} = x^k + \eta_k v^k$ again, we derive
\begin{align}
\|x^{k+1}-x^{0}\|^2 & %= \|x^{k+1}-x^{k}+x^{k}-x^{0}\|^2 
= \eta_k^2\|v^k\|^2 + \|x^{k}-x^{0}\|^2 + 2\eta_k (x^{k}-x^{0})^\top v^k \nonumber \\
& \leq \eta_k^2\|v^k\|^2 + \|x^{k}-x^{0}\|^2 -\mu\eta_k\big(\|x^{k}-x^{0}\|^2 - \Delta(\|F(x^0)\|^2+B) / \ell_F^2 \big) \nonumber \\
& \stackrel{\eqref{control-v-x}}\leq  \eta_k^2 \big(8\ell_F^2\|x^k\!-\!x^0\|^2 + 10(\|F(x^0)\|^2\!+\!B)\big) + 
(1-\mu\eta_k) \|x^{k}\!-\!x^{0}\|^2 + \mu\eta_k\Delta(\|F(x^0)\|^2\!+\!B) / \ell_F^2 \nonumber \\
& \stackrel{\eqref{control-stepsize}}\leq \big(1-\mu\eta_k/2\big) \|x^{k}-x^{0}\|^2 
+ 10\eta_k^2 (\|F(x^0)\|^2+B) + \mu\eta_k\Delta (\|F(x^0)\|^2\!+\!B) / \ell_F^2 \nonumber \\
& \stackrel{\ref{case2}} \leq
\Big(\big(1-\mu\eta_k/2\big) \big(2\Delta + 5/4\big) + \mu\eta_k\Delta \Big) (\|F(x^0)\|^2+B) / \ell_F^2 + 10\eta_k^2 (\|F(x^0)\|^2\!+\!B) \nonumber \\
& \leq \big(2\Delta + 5/4\big)  (\|F(x^0)\|^2+B) / \ell_F^2, \label{op-case-2-x}
\end{align}
where the last inequality uses $10\eta_k^2 \leq 5\mu\eta_k / (8\ell_F^2)$. 
Combining \eqref{op-case-1-x} and \eqref{op-case-2-x}, we thus conclude that \eqref{op-bound-x-v-1} holds in all cases. 
Finally, \eqref{op-bound-x-v-2} follows immediately from combining \eqref{op-bound-x-v-1} and \eqref{control-v-x}.
\end{proof}

Now we are ready to present our main result. 

\begin{thom} \label{thom:op}
Consider Problem \eqref{pro-vi} under Assumption \ref{assmp:vi}. Let $\{x^t\}$ be the sequence generated by Algorithm \ref{alg:cgm-op}.
Let $T\geq 1$ be any integer and define $\bar{x}^{T}=\sum_{t=0}^{T-1} (t+16\kappa^2-1) x^{t} / \sum_{t=0}^{T-1} (t+16\kappa^2-1)$. 
We then have the following convergence results: 
\begin{align}
\mathcal{G}(\bar{x}^{T})& \leq \frac{2\mu D^2(8\kappa^2-1)(16\kappa^2-1)}{T(T+32\kappa^2-3)}+\frac{(16\Delta+20)(\|F(x^0)\|^2+B)}{\mu(T+32\kappa^2-3)}\sim \mathcal{O}(1/T), \label{op-op-gap}\\
g_i(\bar{x}^T) &  \leq \frac{4C_{4}}{\mu(T+32\kappa^2-3)}\Big(L_g+\frac{\ell_g C_{4}}{2\mu}\Big)+ \frac{2\ell_g C_{4}^2\log T}{\mu^2(T+32\kappa^2-3)}\sim \tilde{\mathcal{O}}(1/T), ~~\forall  i\in [m+1].\label{op-vr-gap}
\end{align}
Here, the optimality gap $\mathcal{G}(\cdot)$ is defined in Definition \ref{weak-e-sol}, $L_g := \max\{ \|\nabla g_i(x)\|: x\in B(x^0, C_{3}), i\in [m+1]\}$,  $C_3$ and $C_4$ are defined in \eqref{op-bound-x-v}, and $\kappa = \ell_F/\mu$. 
Additionally, for the constraint violation, we also have the following non-ergodic convergence rate for all $t\geq 1$:
\begin{equation}\label{op-vr-gap-nonergodic}
g_{i}(x^{t+1}) \leq \frac{2C_{4}}{\mu(t+16\kappa^2+1)}\left(L_g+\frac{\ell_g C_{4}}{2\mu}\right) + \frac{\ell_g C_{4}^2\log t}{\mu^2(t+16\kappa^2+1)} \sim\tilde{\mathcal{O}}(1/t), \quad 
\forall  i\in[m+1].
\end{equation}
\end{thom}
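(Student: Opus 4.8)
The plan is to mirror the structure of the proof of Theorem 3.1, but with the iterate-displacement bounds coming from Lemma 4.1 instead of Lemma 3.2. First I would establish the optimality-gap bound \eqref{op-op-gap}. Fix an arbitrary $x\in\mathcal{C}$. Using the update $x^{t+1}=x^t+\eta_t v^t$ together with $v^t+(x-x^t)\in V_\alpha(x^t)$ from Lemma 2.2 (valid since, as noted in Remark 3.1, the switch to violated constraints does not affect the conclusion), I would expand $\|x^{t+1}-x\|^2=\|x^t-x\|^2+2\eta_t\langle v^t,x^t-x\rangle+\eta_t^2\|v^t\|^2$ and control the cross term. The key inequality is that $v^t$ is the minimizer of $\|v+F(x^t)\|^2$ over $V_\alpha(x^t)$, so comparing against the feasible point $v^t+\alpha(x-x^t)$... more directly, the standard trick here is $\langle v^t+F(x^t),\,\alpha(x-x^t)\rangle\le 0$ (first-order optimality of the projection against the direction $\alpha(x-x^t)\in V_\alpha(x^t)$, using Lemma 2.1 and that the feasible set is a polytope containing $0$ when $x^t\in\mathcal{C}$ — more carefully, $\langle v^t+F(x^t), w-v^t\rangle\ge 0$ for all $w\in V_\alpha(x^t)$, and taking $w=v^t+\alpha(x-x^t)$ gives $\langle v^t+F(x^t),\alpha(x-x^t)\rangle\ge 0$, i.e. $\langle v^t,x-x^t\rangle\ge -\langle F(x^t),x-x^t\rangle$, hence $\langle v^t,x^t-x\rangle\le\langle F(x^t),x^t-x\rangle$). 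Then strong monotonicity gives $\langle F(x^t),x^t-x\rangle\ge \langle F(x),x^t-x\rangle+\mu\|x^t-x\|^2$, so $\langle v^t,x^t-x\rangle\le -\langle F(x),x-x^t\rangle-\mu\|x^t-x\|^2$ — wait, the sign needs care; the upshot is a recursion of the form
\begin{equation*}
\eta_t\langle F(x),x^t-x\rangle \le \tfrac12(1-2\mu\eta_t)\|x^t-x\|^2-\tfrac12\|x^{t+1}-x\|^2+\tfrac12\eta_t^2\|v^t\|^2.
\end{equation*}
With $\eta_t=1/(\mu(t+16\kappa^2))$ the coefficient $(1-2\mu\eta_t)$ equals $1-2/(t+16\kappa^2)$, and multiplying through by the telescoping weight $w_t=(t+16\kappa^2-1)$ should make the $\|x^t-x\|^2$ terms telescope (since $(t+16\kappa^2-1)(t+16\kappa^2-2)\ge (t+16\kappa^2-2)(t+16\kappa^2-1)$ — the weight is chosen precisely so $w_t(1-2\mu\eta_t)\le w_{t-1}$). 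Summing from $t=0$ to $T-1$, dividing by $\sum_{t=0}^{T-1}w_t = T(T+32\kappa^2-3)/2$, and using $\|v^t\|\le C_4$ from \eqref{op-bound-x-v-2} together with $\|x^0-x\|\le D$ and $\eta_t^2 w_t\le 1/\mu^2 \cdot 1/(t+16\kappa^2)$ (so $\sum \eta_t^2 w_t$ is bounded, or more crudely $\eta_t^2\|v^t\|^2 w_t\le C_4^2/(\mu^2(t+16\kappa^2))$, summing to a harmless factor) yields the stated $\mathcal{O}(1/T)$ bound; taking the max over $x\in\mathcal{C}$ on the left gives $\mathcal{G}(\bar x^T)$ by convexity of $x\mapsto\langle F(x),z-x\rangle$... actually $\langle F(x),\bar x^T-x\rangle$ is what appears, and since $\bar x^T$ is a convex combination of the $x^t$ we get $\langle F(x),\bar x^T-x\rangle\le \sum w_t\langle F(x),x^t-x\rangle/\sum w_t$, which is exactly what the recursion bounds. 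The precise matching of the constants $2\mu D^2(8\kappa^2-1)(16\kappa^2-1)$ and $(16\Delta+20)(\|F(x^0)\|^2+B)/\mu$ against the telescoped boundary term and the step-size-squared sum is a bookkeeping exercise I would carry out at the end.

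For the feasibility bounds \eqref{op-vr-gap} and \eqref{op-vr-gap-nonergodic}, I would reuse verbatim the two-case argument from the proof of Theorem 3.1 (the varying-step-size part), now with $\|v^t\|\le C_4$, $\alpha=\mu$, and step size $\eta_t=1/(\mu(t+16\kappa^2))$. Case (a): if $i\notin I_{x^t}$ then $g_i(x^t)\le 0$ and convexity plus $\|v^t\|\le C_4$ gives $g_i(x^{t+1})\le \eta_t\|\nabla g_i(x^{t+1})\|\|v^t\|\le C_4 L_g/(\mu(t+16\kappa^2))$, where $L_g$ is the stated sup of $\|\nabla g_i\|$ over $B(x^0,C_3)$, valid because $\|x^t-x^0\|\le C_3$ by \eqref{op-bound-x-v-1}. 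Case (b): if $i\in I_{x^t}$ then $\eta_t\mu g_i(x^t)+\langle\nabla g_i(x^t),x^{t+1}-x^t\rangle\le 0$ from the polytope definition, and $\ell_g$-smoothness gives $g_i(x^{t+1})\le(1-1/(t+16\kappa^2))g_i(x^t)+\ell_g C_4^2/(2\mu^2(t+16\kappa^2)^2)$; multiplying by $(t+16\kappa^2+1)\le 2(t+16\kappa^2)$ gives the one-step telescoping recursion. Defining $\theta_i(t)=\max\{s<t : g_i(x^s)\le 0\}$ (which exists and is $\ge 0$ since $x^0\in\mathcal{C}$, so $g_i(x^0)\le 0$), summing the recursion from $s=\theta_i(t+1)+1$ to $t$, and using case (a) at the index $\theta_i(t+1)$ yields \eqref{op-vr-gap-nonergodic}; the ergodic bound \eqref{op-vr-gap} then follows either from convexity of $g_i$ applied to $\bar x^T$ together with the non-ergodic per-iterate bounds, or — and this is cleaner given the $(T+32\kappa^2-3)$ denominator — by running the weighted-average argument directly on the feasibility recursion (weights $w_t$ again). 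The extra factor of $2$ in \eqref{op-vr-gap} versus \eqref{op-vr-gap-nonergodic} is consistent with passing from the $(t+16\kappa^2+1)$ scaling to the averaged $(T+32\kappa^2-3)$ scaling.

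The main obstacle I anticipate is getting the telescoping weights exactly right: one needs $w_t=(t+16\kappa^2-1)$ to satisfy $w_t(1-2\mu\eta_t)\le w_{t-1}$ with $\eta_t=1/(\mu(t+16\kappa^2))$, i.e. $(t+16\kappa^2-1)(1-2/(t+16\kappa^2))\le t+16\kappa^2-2$; this reduces to $(t+16\kappa^2-1)(t+16\kappa^2-2)\le (t+16\kappa^2-2)(t+16\kappa^2)$, which holds, but one must double-check the edge case $t=0$ where $w_{-1}$ would be needed — here instead the $t=0$ term contributes the boundary term $w_0\|x^0-x\|^2\le(16\kappa^2-1)D^2$ directly rather than telescoping, which is where the $D^2(8\kappa^2-1)(16\kappa^2-1)$ factor ultimately comes from after accounting for the $(1-2\mu\eta_0)$ factor and division by $\sum w_t$. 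The second delicate point is that the auxiliary constraint $g_{m+1}$ must be handled on the same footing as $g_1,\dots,g_m$ in the feasibility argument — but since Lemma 4.1 already controls $\|x^t-x^0\|$, the quantity $g_{m+1}(x^t)=\|x^t-x^0\|^2-\frac{\Delta}{\ell_F^2}(\|F(x^0)\|^2+B)$ is uniformly bounded and $\nabla g_{m+1}(x)=2(x-x^0)$ has norm $\le 2C_3$ on $B(x^0,C_3)$, so it is absorbed into $L_g$ and the index set $[m+1]$ in the statement; I would note that $g_{m+1}(\bar x^T)\le 0$ in fact holds trivially since $\mathcal{C}\subseteq B(x^0, \sqrt{\Delta(\|F(x^0)\|^2+B)}/\ell_F)$ and $\bar x^T$ need not lie in $\mathcal{C}$, so the bound for $i=m+1$ is genuinely the same computation. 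Everything else is a direct transcription of Section 3's machinery with $\nabla f$ replaced by $F$ and the strong-convexity inequalities replaced by strong monotonicity.
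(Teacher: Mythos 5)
Your proposal is correct and follows essentially the same route as the paper: the optimality bound comes from the QP optimality condition $\langle v^t+F(x^t),x^t-x\rangle\le 0$ (obtained via Lemma \ref{lemma:depict_veloc_2}), strong monotonicity, and a telescoping weighted sum, while the feasibility bounds reuse the two-case $\theta_i(t)$ argument from Theorem \ref{thom:f} with $C_1$ and $\kappa$ replaced by $C_4$ and $16\kappa^2$. The one bookkeeping point to fix is that, since your recursion has already been multiplied by $\eta_t$, the telescoping weight must be $(t+16\kappa^2-1)(t+16\kappa^2)$ rather than $t+16\kappa^2-1$ (equivalently, multiply the unscaled inequality by $C_t'=t+16\kappa^2-1$, as the paper does), so that the left-hand side carries exactly the weights $t+16\kappa^2-1$ appearing in the definition of $\bar{x}^T$; with that choice the telescoping is exact and the stated constants drop out.
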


\begin{proof} 
Using the optimality conditions of \eqref{update-v}, we obtain $\langle v^t + F(x^t), v^t-v\rangle \leq 0$ for any $v \in V_{\alpha}(x^t)$. 
Let $x \in \mathcal{C}$ be arbitrarily fixed. Then, we have $v=v^t+x-x^t \in V_{\alpha}(x^t)$ from Lemma \ref{lemma:depict_veloc_2}.
Hence, we have $\langle v^t + F(x^t), x^t-x\rangle \leq 0$, which further implies
\begin{equation}\label{thm41a}
\langle F(x^{t}),x^{t}-x\rangle  \leq \langle v^t,x-x^t\rangle = \frac{1}{\eta_t}\langle x^{t+1}-x^{t}, x-x^t\rangle 
=\frac{1}{2 \eta_{t}} \big( \|x-x^{t}\|^{2} -  \|x-x^{t+1}\|^{2} \big) + \frac{\eta_{t}}{2} \|v^{t}\|^{2}.
\end{equation}
Moreover, since $F(x)$ is $\mu$-strongly convex, 
we have
\[
\begin{aligned}
\langle F(x),x^{t}-x\rangle & \leq \langle F(x^{t}),x^{t}-x\rangle-\mu\left\|x^{t}-x\right\|^{2} \\
& \stackrel{\eqref{thm41a}}\leq\Big(\frac{1}{2 \eta_{t}}-\mu\Big)\left\|x-x^{t}\right\|^{2}-\frac{1}{2 \eta_{t}}\left\|x-x^{t+1}\right\|^{2}+\frac{\eta_{t}}{2}\left\|v^{t}\right\|^{2} \\
& \stackrel{\eqref{op-bound-x-v-2}} \leq \frac{\mu(t+16\kappa^2-2)}{2}\left\|x-x^{t}\right\|^{2}-\frac{\mu(t+16\kappa^2)}{2}\left\|x-x^{t+1}\right\|^{2}+ \frac{C_{4}^2}{2\mu(t+16\kappa^2)},
\end{aligned}
\]
where the last inequality also uses %$\alpha=\mu$ and 
$\eta_t = \frac{1}{\mu(t+16\kappa^2)}$ for $t\geq 0$. 
For simplicity, we let $C_t':= t+16\kappa^2-1$ for $t\geq 0$.
Multiplying both sides of the above inequality by $C_t'$, we obtain
\[
C_t'\langle F(x),x^{t}-x\rangle  
\leq   \frac{\mu}{2} \Big( (C_t' - 1)C_t' \left\|x-x^{t}\right\|^{2} - C_t' (C_t' + 1) \left\|x-x^{t+1}\right\|^{2} \Big) + \frac{C_{4}^2}{2\mu}.
\]
Summing this inequality for $t = 0, \ldots, T-1$, and using the definition of $\bar{x}^{T}$, we derive
\[
\begin{aligned}
\langle F(x),\bar{x}^{T}-x\rangle & =\frac{2}{T(T+32\kappa^2-3)} \sum\nolimits_{t=0}^{T-1} C_t' \langle F(x), x^{t}-x\rangle \\
& \leq \frac{2}{T(T+32\kappa^2-3)} \Big(\frac{\mu}{2} (C_0'-1)C_0' \left\|x-x^{0}\right\|^{2}+ \frac{T C_{4}^2}{2\mu}\Big) \\
& = \frac{2\mu(8\kappa^2-1)(16\kappa^2-1)}{T(T+32\kappa^2-3)}\|x-x^0\|^2+\frac{(16\Delta+20)(\|F(x^0)\|^2+B)}{\mu(T+32\kappa^2-3)},
\end{aligned}
\]
where the last equality follows from direct computation and the definition of $C_4$ in \eqref{op-bound-x-v-2}. 
Then, the desired result  \eqref{op-op-gap} follows immediately from the definition of $D$ in Assumption \ref{assmp:vi}
and the arbitrariness of $x\in \mathcal{C}$. 

To prove \eqref{op-vr-gap} and \eqref{op-vr-gap-nonergodic}, we consider the following two cases:
\begin{itemize}%[leftmargin=*]
    \item[(a)] If $i \notin I_{x^{t}}$, then $
g_{i}(x^{t})\leq 0$. Using the convexity of $g_i$, the definition of $L_g$, and \eqref{op-bound-x-v-2}, we obtain 
\begin{equation} 
\begin{aligned} \label{op-case-1}
g_{i}(x^{t+1}) & \leq g_i(x^t) + \nabla g_{i}(x^{t+1})^{\top} (x^{t+1}-x^{t})  \\
& \leq  \eta_{t} \nabla g_{i}(x^{t+1})^{\top} v^{t} \leq \eta_{t}\|\nabla g_{i}(x^{t+1})\|\|v^{t} \| \leq \frac{C_{4} L_{g}}{\mu(t+16\kappa^2)}, \quad \forall   i \notin I_{x^{t}}.
\end{aligned}
\end{equation}
\item[(b)] If $i \in I_{x^{t}}$, then we have
$g_{i}(x^{t}) > 0$, and $\eta_{t} \mu g_{i}(x^{t})+\langle \nabla g_{i}(x^{t}), x^{t+1}-x^{t}\rangle \leq 0$. Since $g_i$ is $\ell_g$-smooth, we have
\[
\begin{aligned}
g_{i}(x^{t+1}) & \leq g_{i}(x^{t})+\langle \nabla g_{i}(x^{t}), x^{t+1}-x^{t}\rangle+\frac{\ell_g}{2}\|x^{t+1}-x^{t}\|^2 \\
& \leq 
\left(1-\mu \eta_{t}\right) g_{i}(x^{t})+\frac{\ell_{g}}{2} \eta_{t}^{2} \|v^t\|^2 \leq \left(1-\frac{1}{t+16\kappa^2}\right) g_{i}(x^{t})+\frac{\ell_g C_{4}^2}{2\mu^2(t+16\kappa^2)^{2}}.
\end{aligned}
\]
Multiplying both sides by $(t+16\kappa^2+1)$, we derive 
\begin{equation*}
\begin{aligned}
(t+16\kappa^2+1)g_{i}(x^{t+1})& \leq \left(1-\frac{1}{t+16\kappa^2}\right)(t+16\kappa^2+1)g_{i}(x^{t})+\frac{\ell_g C_{4}^2}{\mu^2(t+16\kappa^2)} \\
& \leq (t+16\kappa^2)g_{i}(x^{t})+\frac{\ell_g C_{4}^2}{\mu^2(t+16\kappa^2)},
\end{aligned}
\end{equation*}
which further implies 
\begin{equation} \label{op-recursion}
(t+16\kappa^2+1)g_{i}(x^{t+1})-(t+16\kappa^2)g_{i}(x^{t}) \leq \frac{\ell_g C_{4}^2}{\mu^2(t+16\kappa^2)}.    
\end{equation}
Recall the definition of $\theta_i(t)$ in \eqref{def-theta-i-t}. 
Since $g_i(x^t)>0$, we have $0\leq \theta_i(t+1)<t$, $g_i(\theta_i(t+1))\leq 0$ and $g_i(\theta_i(t+1)+w)>0$ holds for $w=1,\ldots, t-\theta_i(t+1)$. 
In other words, we have $i \notin I_{x^{\theta_i(t+1)}}$ and $i \in I_{x^{s}}$ for any $s=\theta_i(t+1) + 1, \ldots, t$. Note that \eqref{op-recursion} holds for any $t$ such that $i \in I_{x^t}$. Therefore, the following inequalities hold:
\[
(s+16\kappa^2+1)g_{i}(x^{s+1})-(s+16\kappa^2)g_{i}(x^{s}) \leq \frac{\ell_g C_{4}^2}{\mu^2(s+16\kappa^2)}, \quad s=\theta_i(t+1) + 1,\ldots,t.
\]
Summing the above inequality for $s=\theta_i(t+1) + 1,\ldots,t$ yields
\[
\begin{aligned}
(t+16\kappa^2+1)g_{i}(x^{t+1}) & \leq  (\theta_i(t+1)+1+16\kappa^2)g_{i}(x^{\theta_i(t+1)+1})+ \sum_{s=\theta_i(t+1)+1}^{t} \frac{\ell_g C_{4}^2}{\mu^2(s+16\kappa^2)}\\
& \leq \frac{C_{4} L_{g}(\theta_i(t+1)+1+16\kappa^2)}{\mu(\theta_i(t+1)+16\kappa^2)} +  \frac{\ell_g C_{4}^2}{\mu^2}\sum_{s=1}^{t} \frac{1}{s}\\
& \leq \frac{2 C_{4} L_{g}}{\mu} +  \frac{\ell_g C_{4}^2}{\mu^2}(1+\log t) \leq  \frac{2C_{4}}{\mu}\left(L_g+\frac{\ell_g C_{4}}{2\mu}\right) + \frac{\ell_g C_{4}^2\log t}{\mu^2},
\end{aligned}
\]
where the second inequality follows from \eqref{op-case-1}, since $i \notin I_{x^{\theta_i(t+1)}}$. Consequently, we obtain
\begin{equation} \label{op-case-2}
g_{i}(x^{t+1}) \leq \frac{2C_{4}}{\mu(t+16\kappa^2+1)}\left(L_g+\frac{\ell_g C_{4}}{2\mu}\right) + \frac{\ell_g C_{4}^2\log t}{\mu^2(t+16\kappa^2+1)}, \quad \forall  i \in I_{x^{t}}.
\end{equation}
\end{itemize}
Since the term on the far right-hand side of \eqref{op-case-1} is smaller than that of  \eqref{op-case-2}, we conclude that 
\eqref{op-vr-gap-nonergodic} holds for all $i\in [m+1]$.
Finally, by the definition of $\bar{x}^{T}$, leveraging the convexity of $g_i$, and using \eqref{op-vr-gap-nonergodic}, we can derive the following via simple calculations and appropriate inequalities:
\[
\begin{aligned}
g_i(\bar{x}^T) & \leq \frac{2}{T(T+32\kappa^2-3)} \sum\nolimits_{t=0}^{T-1} (t+16\kappa^2-1) g_i(x^t) \\
& \leq \frac{2}{T(T+32\kappa^2-3)} \sum\nolimits_{t=0}^{T-1} \left(\frac{2C_{4}}{\mu}\big(L_g+\frac{\ell_g C_{4}}{2\mu}\big) + \frac{\ell_g C_{4}^2\log (t+1)}{\mu^2}\right) \\
& \leq \frac{4C_{4}}{\mu(T+32\kappa^2-3)}\Big(L_g+\frac{\ell_g C_{4}}{2\mu}\Big)+ \frac{2\ell_g C_{4}^2\log T}{\mu^2(T+32\kappa^2-3)},
\end{aligned}
\]
which establishes \eqref{op-vr-gap}. This completes the proof. 
\end{proof}

\section{Numerical Experiments} \label{sec:ne}
In this section, we present numerical experiments to validate the convergence results of CGM with general functional constraints, applied to both minimization and VI problems. All implementations were conducted in Python 3.11.0, with quadratic programming (QP) subproblems \eqref{update-v-f} and \eqref{update-v} solved using \verb|cvxpy|. A fixed random seed (\verb|np.random.seed(42)|) was used across all experiments to ensure reproducibility.

\subsection{Resource Allocation Problem}
In this section, we consider the following resource allocation problem (RAP): 
\begin{equation} \label{pro:rap}
\begin{aligned}
\min_{x\in \mathbb{R}^d} & \quad f(x):=\frac{1}{2} x^{\top} \Sigma x + a^{\top} x\\
\text{s.t.} & \quad x\in \mathcal{C}:=\{ x \mid x\geq 0, \; \mathbf{1}^{\top} x = 1,\; r^{\top} x \leq R_{\max},\; x^{\top} E x \leq E_{\max}\},
\end{aligned}
\end{equation}
where $a, r\in \mathbb{R}_{+}^{d}$, $\Sigma\succ 0$, $E\succeq 0$, and $E_{\max}, R_{\max}>0$ are positive constants.
Here, the linear constraint $r^{\top} x \leq R_{\max}$ defines the resource budget limit, the quadratic constraint $x^{\top} E x \leq E_{\max}$ regulates the allowable risk threshold, and the objective function quantifies the allocation cost. Notably, the objective function $f$ is $\lambda_{\max}(\Sigma)$-smooth and $\lambda_{\min}(\Sigma)$-strongly convex. 
To cast the set $\mathcal{C}$ in \eqref{pro:rap} into the form \eqref{pro-con}, we introduce constraint functions $g_i$ for $i=1,2,\ldots, d+4$ as follows: 
$g_i(x)=-\mathbf{e}_{i}^{\top}x$ for $i\in [d]$, $g_{d+1}(x)=\mathbf{1}^{\top} x - 1$, $g_{d+2}(x)= 1-\mathbf{1}^{\top} x$, $g_{d+3}(x)=r^{\top} x -R_{\max} $, and $g_{d+4}(x)=x^{\top} E x - E_{\max}$. 
Let $\mathcal N(0,1)$ denote the normal distribution with mean $0$ and variance $1$, and $\mathcal U(0,1)$ denote the uniform distribution over $[0,1]$.
In our numerical setup, we generate two independent $d\times 10$ Gaussian matrices $G_1$ and $G_2$ with entries sampled from $\mathcal N(0,1)$, and set $\Sigma =G_1 G_{1}^{\top} + 5I_d$ and $E = G_{2} G_{2}^{\!\top} + 10I_d$. 
We define $\bar\sigma = \frac{1}{d}\sum_{i=1}^{d}\sqrt{\Sigma_{ii}}$, sample $u\sim\mathcal U(0,1)^d$, and set $a=\bar{\sigma}u$. 
For generating $r$, we sample each component $r_i\sim|\mathcal N(0,1)|+0.1$ for $i=1,\ldots,d$. 
Additionally, $R_{\max}$ and $E_{\max}$ are computed as $R_{\max}= \frac{1}{d}\sum_{i=1}^{d} r_i$ and $E_{\max}= \tfrac{1}{d^2}{\mathbf 1}^{\top}\!E{\mathbf 1}$,  respectively. The initial point is set to $x^0 = \frac{1}{d}\mathbf 1$ with $d=50$, while the optimal solution $x^*$ and optimal value $f(x^*)$ are obtained using \verb|cvxpy|.

We first evaluate the performance of Algorithm \ref{alg:cgm-f} for solving \eqref{pro:rap} with a constant step size $\eta_t=\eta=\log T/(\mu T)$ for all $t=0,1,\ldots, T-1$,
considering both small iteration counts $T=\{100,150,200,250\}$ and large iteration counts $T=\{1500,2000,2500,3000\}$. 
The results are reported in Figure \ref{f-T}. 

\begin{figure}[!hbtp]
\centering
\subfloat%[Function Value Residual]
{\includegraphics[scale=0.25]{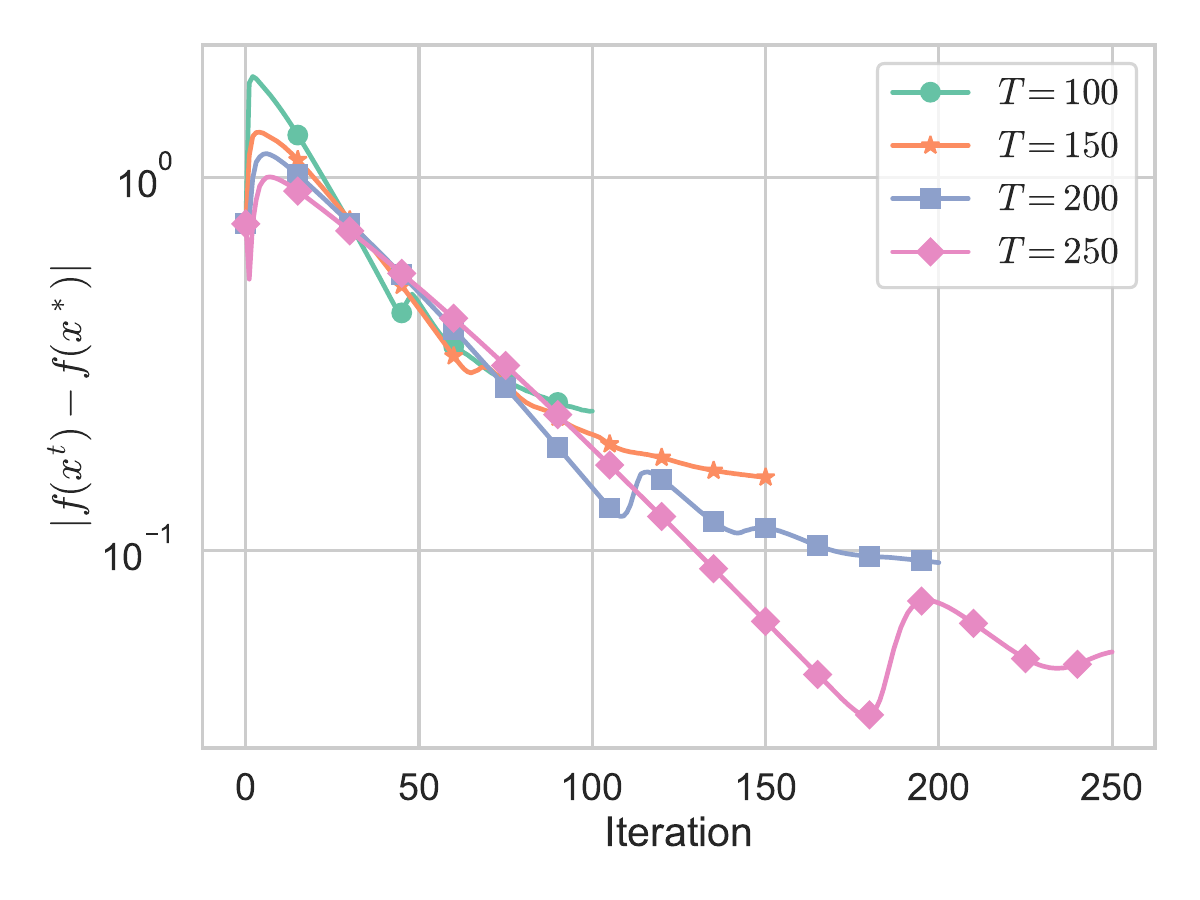}}
\hspace{1em}
\subfloat%[Feasibility]
{\includegraphics[scale = 0.25]{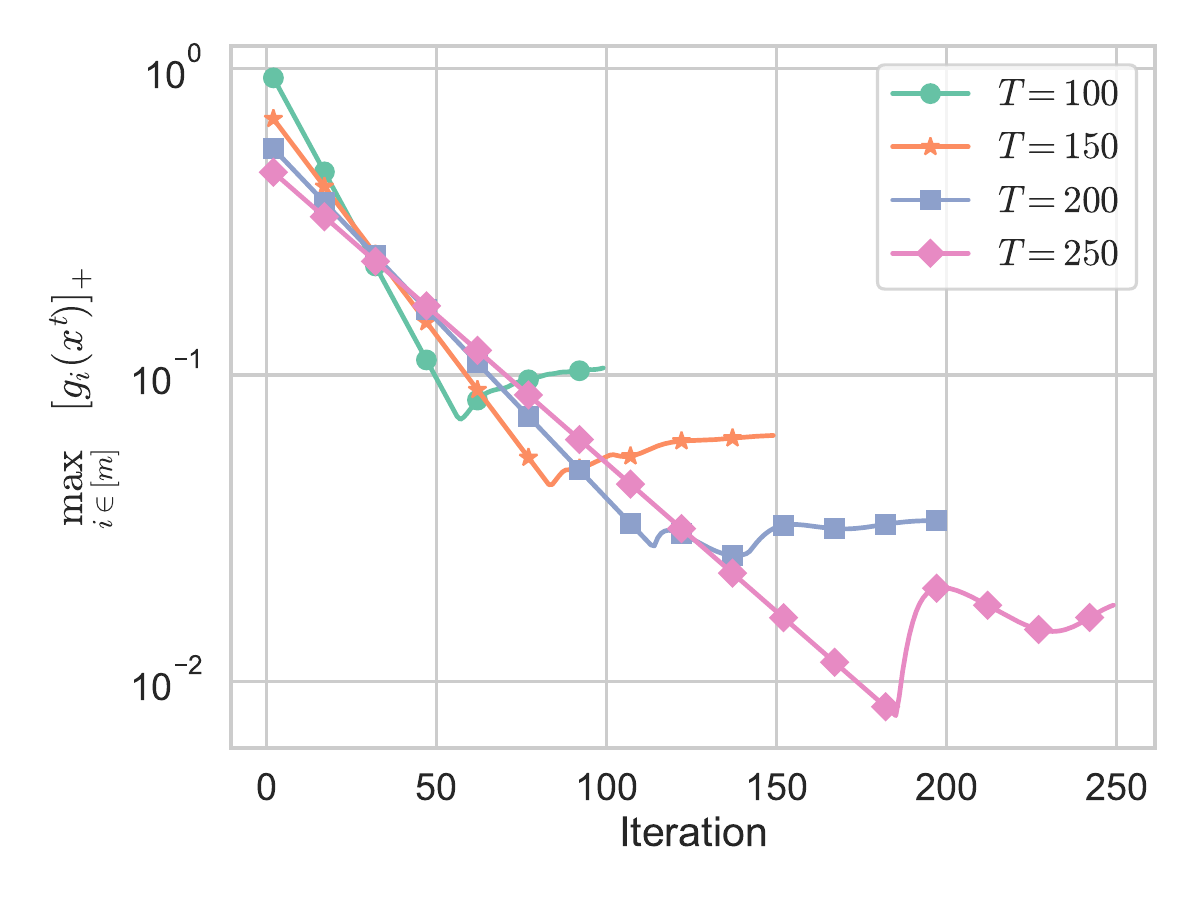}}\\
\vspace{-1em}
\subfloat%[Function Value Residual]
{\includegraphics[scale=0.25]{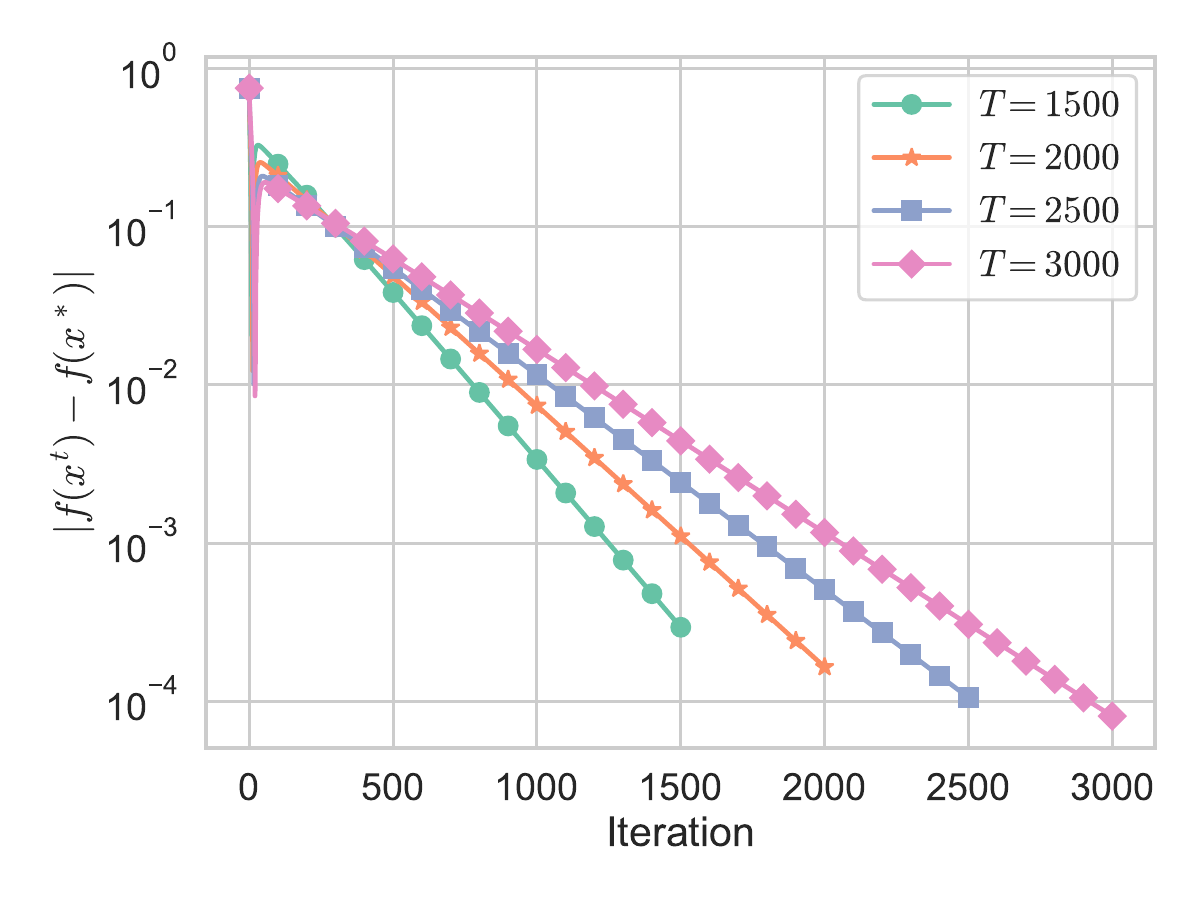}}
\hspace{1em}
\subfloat%[Feasibility]
{\includegraphics[scale = 0.25]{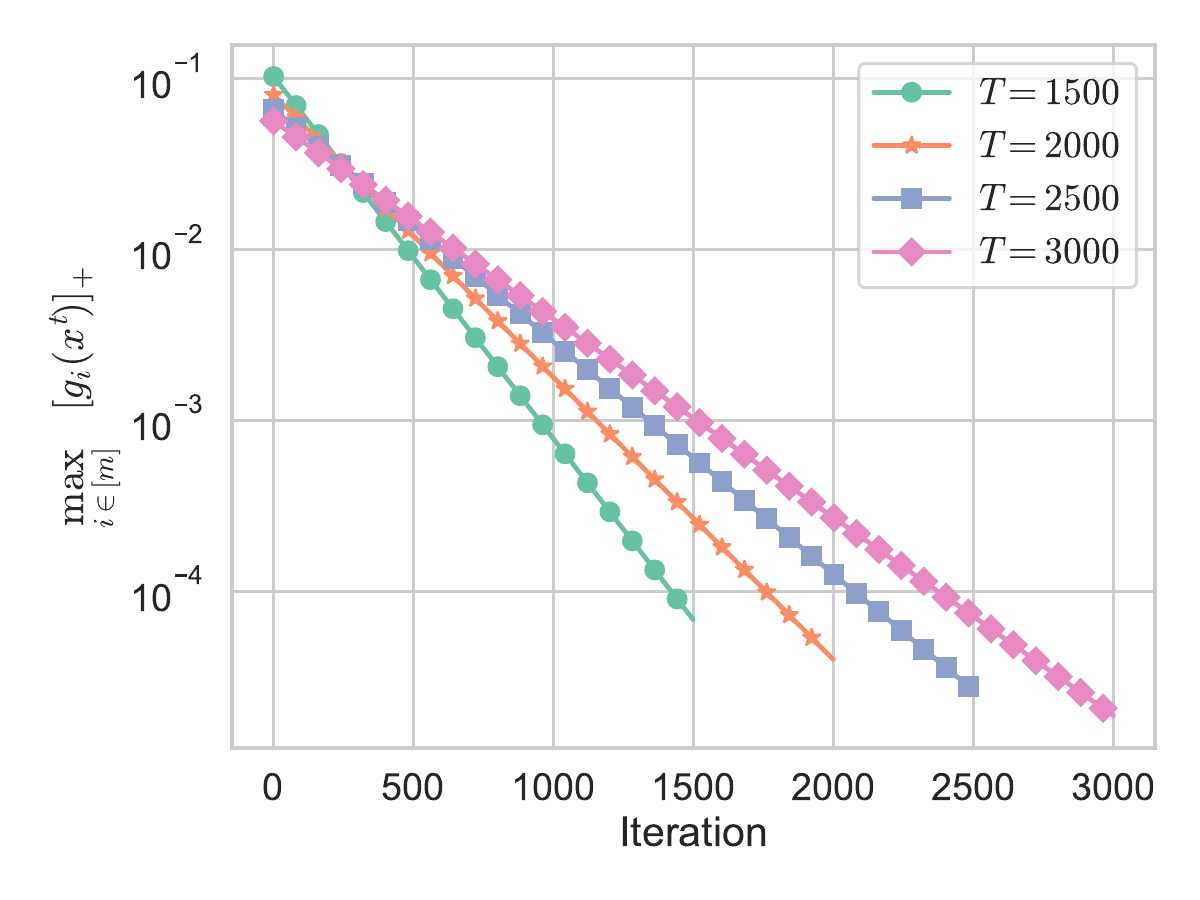}}
\vspace{-1em}
\caption{Convergence results of CGM for the RAP with varying iteration counts: small $T=\{100,150,200,250\}$ and large $T=\{1500,2000,2500,3000\}$.
Left: absolute function value residual; Right: constraint violation with $m=d+4$. }
\label{f-T}
\end{figure}

From Figure \ref{f-T}, we observe that as $T$ increases, the step size decreases and convergence slows (this trend is more pronounced in the second row), while the resulting solution becomes more accurate. Furthermore, by comparing the first row (corresponding to smaller $T$) and the second row (corresponding to larger $T$) of Figure \ref{f-T}, we find that setting $T$ too small may lead to unsatisfactory solution accuracy—whether measured by function value residual or the feasibility. 

We also compare the efficacy of Algorithm \ref{alg:cgm-f} with two step size strategies: a constant step size $\eta_t=\eta=\log T/(\mu T)$ with $T=2000$ and a diminishing step size $\eta_t = 1/\mu(t+\kappa)$. The comparison results are presented in Figure \ref{f-t-T}. From the second subfigure in Figure \ref{f-t-T}, the initial step sizes of the varying strategy may be too large, causing the iterates to exit the constraint set $\mathcal{C}$ and resulting in objective values  
significantly smaller than $f(x^*)$.  Both approaches for setting the step sizes eventually converge to the optimal value. Additionally, while the constant step size yields better performance than the varying step size in Figure \ref{f-t-T}, it requires predefining an appropriate $T$. As illustrated in Figure \ref{f-T} (particularly in the first row),  
an improperly chosen $T$ will limit the precision of both the function value residual and constraint violation.

\begin{figure}[!htbp]
\centering
\subfloat%[Absolute Function Value Residual]
{\includegraphics[scale=0.25]{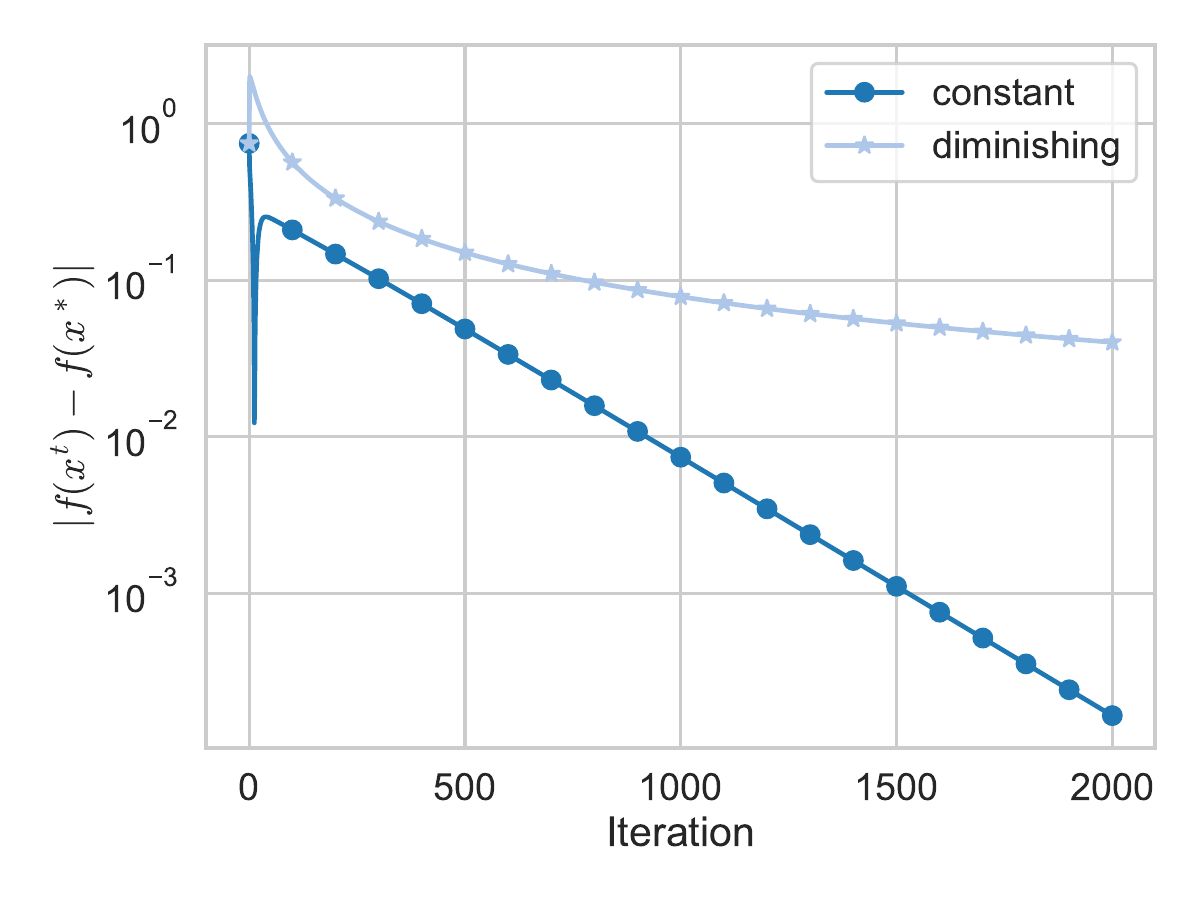}}
\subfloat%[Function Value Residual]
{\includegraphics[scale = 0.25]{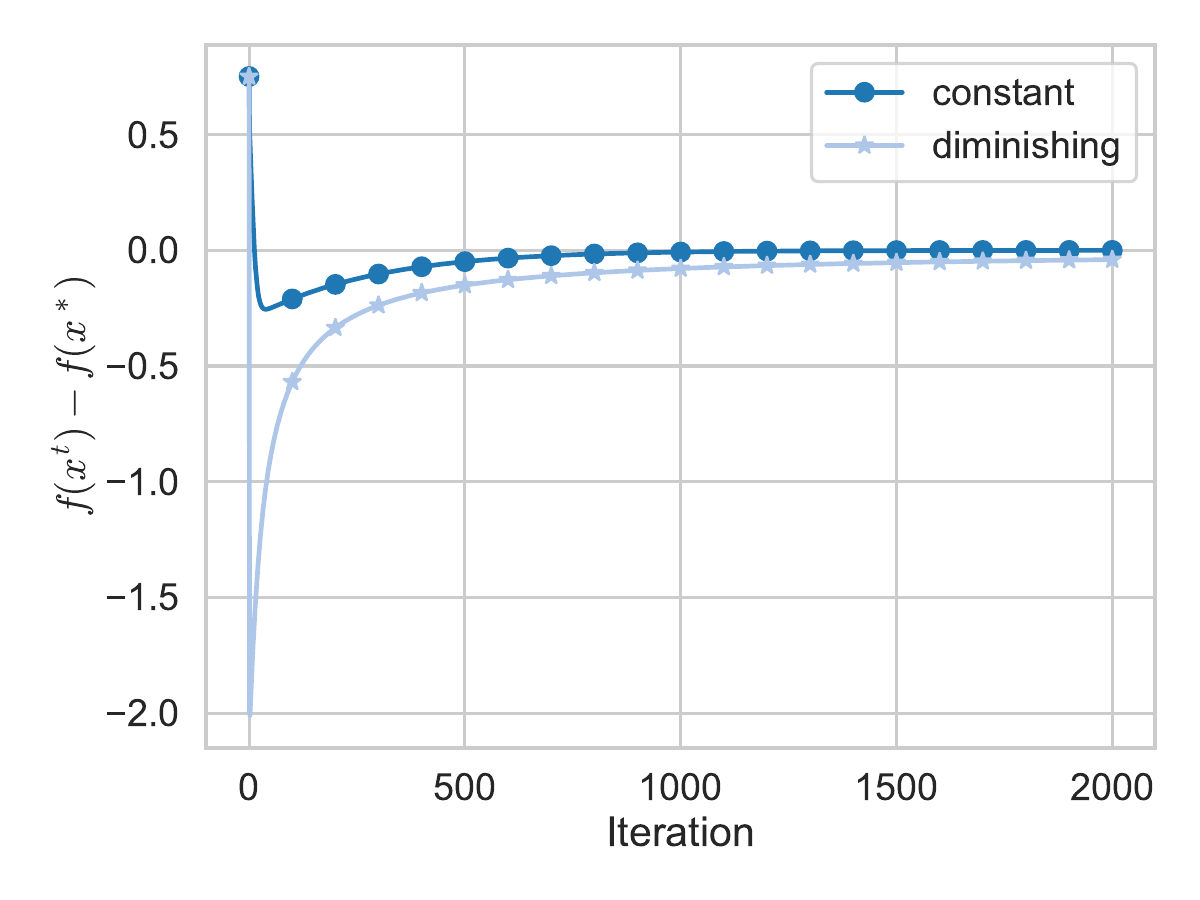}}
\subfloat%[Feasibility]
{\includegraphics[scale = 0.26]{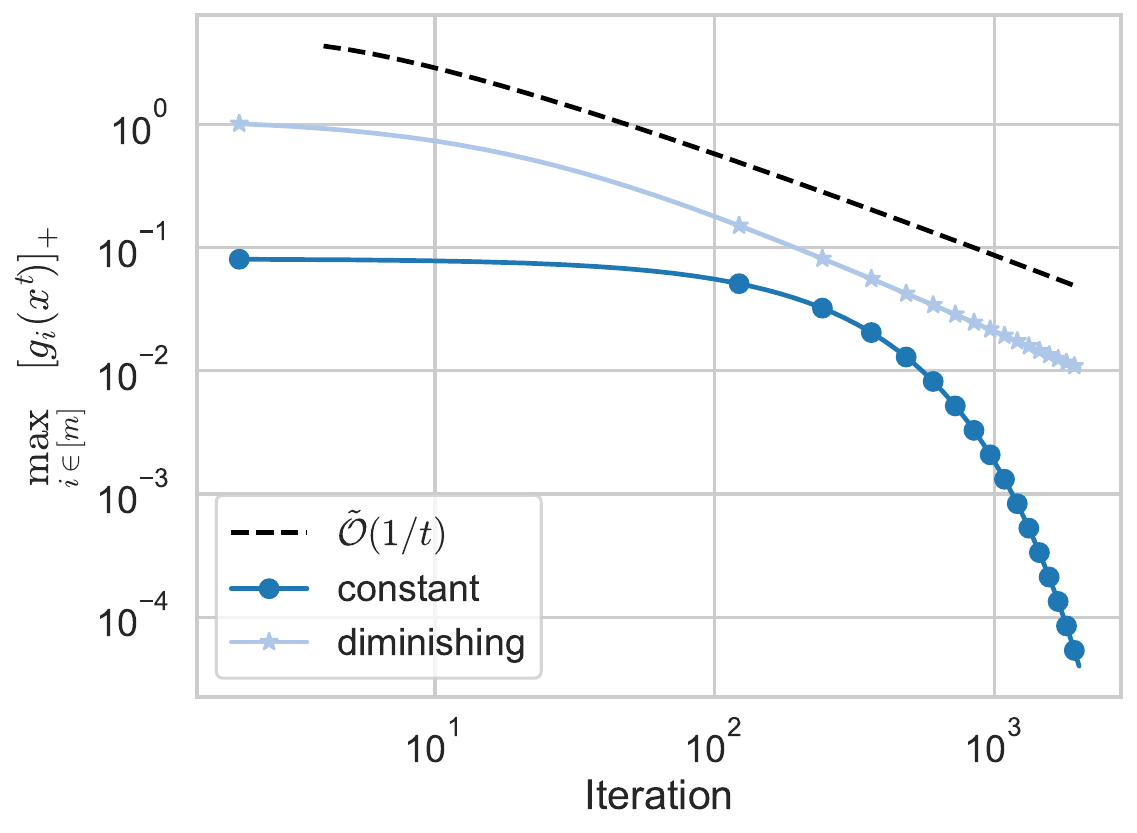}}
\caption{Convergence results of CGM for the RAP: constant step size $\eta_t=\eta=\log T/(\mu T)$ (with $T=2000$) versus 
diminishing step size $\eta_t = 1/\mu(t+\kappa)$. 
Left: absolute function value residual; 
Middle: function value residual; 
Right: constraint violation with $m=d+4$. }
\label{f-t-T}
\end{figure}

\subsection{High-Dimensional Bilinear Game}
In this section, we consider a high-dimensional bilinear game (HBG) problem formulated as the following min-max problem \cite{CYPJ24}:
\begin{equation} \label{saddle-hbg}
\begin{aligned}
\min_{x_1\in \mathbb{R}^d} \max_{x_2 \in \mathbb{R}^d} & \quad \beta x_1^\top x_1  + (1-\beta)  x_1^\top x_2 - \beta  x_2^\top x_2, \\
\text{s.t.} & \quad  x\in \mathcal{C}:=\{x = [x_1^{\top}, x_2^{\top}]^{\top} \mid x \geq \mathbf{0}_{2d},\; \mathbf{1}_{d}^\top x_1 = 1,\; \mathbf{1}_{d}^\top x_2 = 1\},
\end{aligned}
\end{equation}
where $x=[x_1^{\top}, x_2^{\top}]^{\top}$ encompasses the strategies of two players (i.e., $x_1$ and $x_2$), 
and the parameter $\beta \in (0,1)$ modulates the rotational influence within the game.
Problem \eqref{saddle-hbg} can be reformulated as the VI problem \eqref{pro-vi}, with $F$ and constraint functions $g_i$ ($i=1,\ldots, 2d+4$) defined as follows:
\begin{equation*}
F(x) = \begin{bmatrix}
2\beta I_d & (1-\beta)I_d\\
-(1-\beta)I_d & 2\beta I_d
\end{bmatrix}x, \quad  g_i(x)=-\mathbf{e}_{i}^{\top}x \leq 0, \;\forall  i\in [2d],
\end{equation*}
along with $g_{2d+1}(x)=\mathbf{1}_{d}^\top x_1  - 1 \leq 0$, $g_{2d+2}(x) = 1-\mathbf{1}_{d}^\top x_1  \leq 0$, 
$g_{2d+3}(x)=\mathbf{1}_{d}^\top x_2  - 1 \leq 0$ and $g_{2d+4}(x)=1-\mathbf{1}_{d}^\top x_2 \leq 0$.
In this problem, $F$ satisfies \eqref{relax-con} with $\ell_F=\sqrt{5\beta^2-2\beta+1}$ and $B=0$. Furthermore, $F$ is $\mu$-strong monotone with $\mu=2\beta$. 
The initial point $x^0$ is constructed as follows:  first generate a $2d$-dimensional vector sampled from  $\mathcal U(0,1)$, then normalize its first $d$ components and last $d$ components to sum to $1$ respectively—this ensures both $x_{1}^0$ and $x_2^0$ lie in the probability simplex. 
In this case, $x^*$ is given by $\frac{1}{d}[\mathbf{1}_d^{\top}, \mathbf{1}_d^{\top}]^{\top}$, and we set $d=500$ in our experiment.
For this problem, we have $D = 2$.

First, we validate Algorithm \ref{alg:cgm-op} with parameters $\alpha=\mu$, $\Delta= \max\left\{1,  D^2\ell_F^2 / (\|F(x^0)\|^2+B) \right\}$, and the step size $\eta_t = 1/{\mu(t+16\kappa^2)}$ for $t\geq 0$. For computational efficiency, we adopt the (strong) gap function $\max_{x \in \mathcal{C}} \langle F(x^t), x^t - x \rangle$ as the optimality measure. This is because the constraint set $\mathcal{C}$ is the Cartesian product of two simplices, which allows this gap function to be computed in closed form as: 
\[\max_{x \in \mathcal{C}} \langle F(x^t), x^t - x \rangle = \langle F(x^t),x^t\rangle - \min_{j\in [d]} \left( 2\beta [x_1^t]_j + (1-\beta) [x_2^t]_j \right) - \min_{j\in [d]} \left( - (1-\beta) [x_1^t]_j + 2\beta [x_2^t]_j \right),\]
where $x^t = [(x_1^t)^{\top}, (x_2^t)^{\top}]^{\top}$, and $[x]_j$ denotes the $j$-th component of the vector $x$.
The numerical results are presented in Figure \ref{op-validity}. As shown in Figure \ref{op-validity}, the results align with Theorem \ref{thom:op}, confirming the validity of our theoretical analysis. 

\begin{figure}[!htbp]
\centering
\subfloat%[Optimality]
{\includegraphics[scale=0.3]{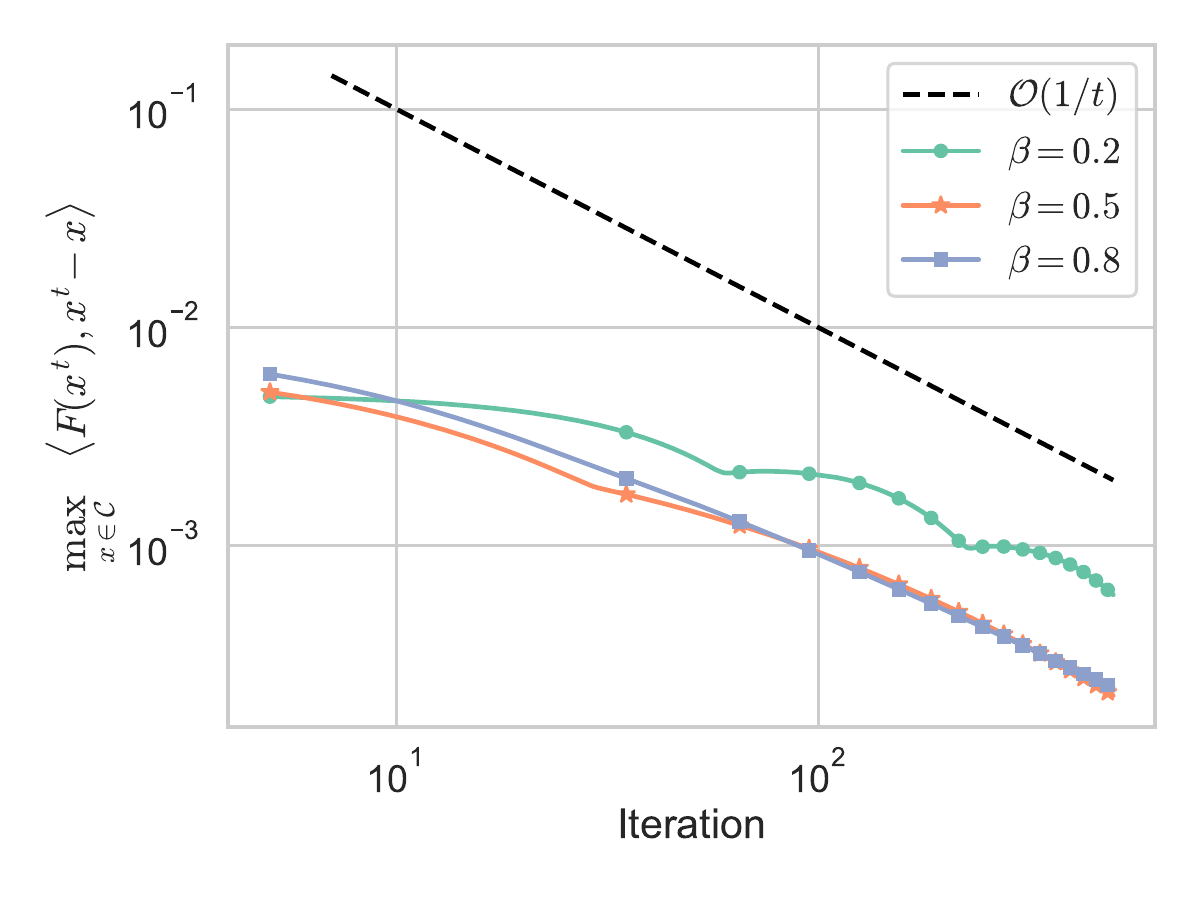}}
\subfloat%[Feasibility]
{\includegraphics[scale = 0.3]{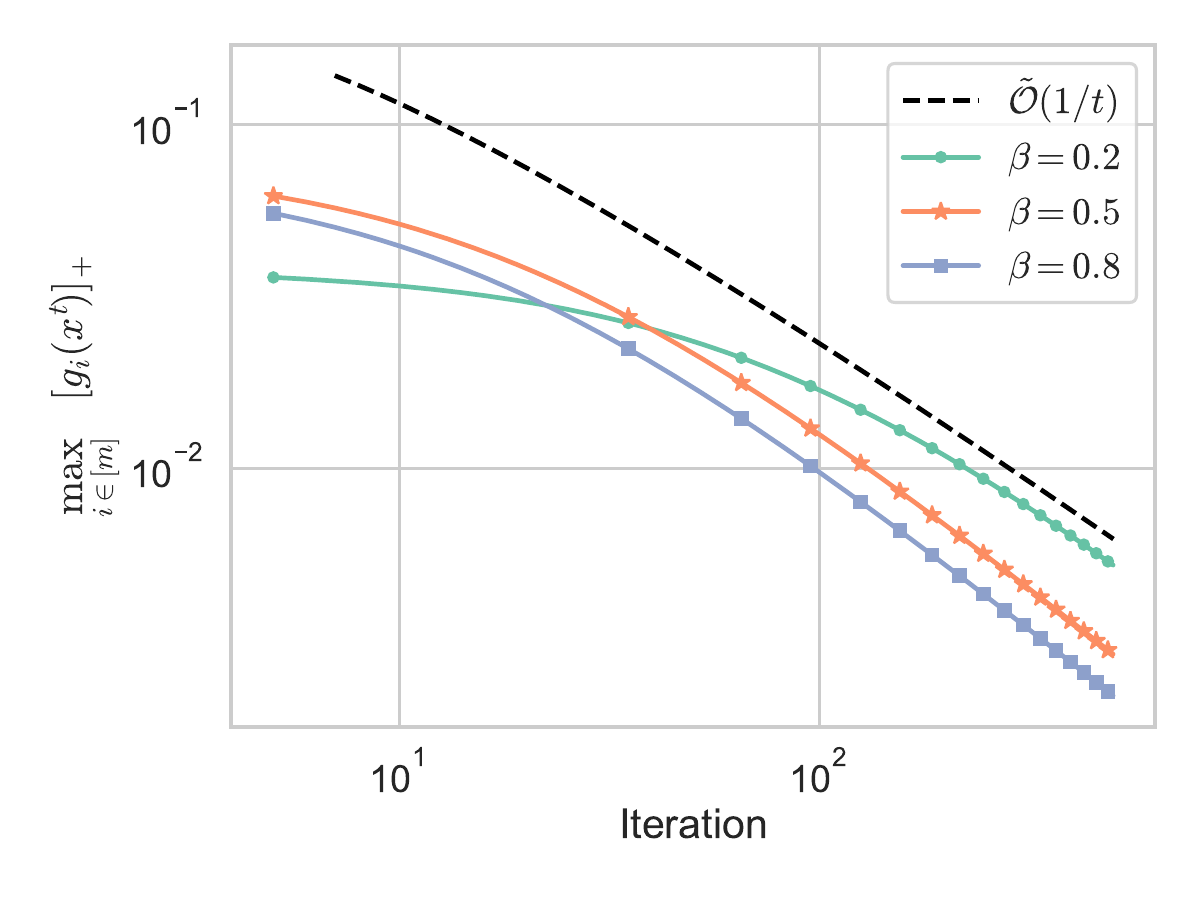}}
\caption{Convergence results of CGM for the HBG problem with different $\beta$ values. Left: optimality residual; Right: constraint violation with $m=2d+5$ (including the auxiliary constraint). }
\label{op-validity}
\end{figure}

Furthermore, we conducted numerical experiments to compare CGM (with the same parameter settings as above in this subsection) with other projection-based methods under fixed iteration counts and CPU time constraints, with $\beta$ set to $0.8$. The compared methods are as follows: 
\begin{enumerate}
    \item Projected gradient descent ascent (GDA) method: $x^{t+1}=\text{Proj}_{\mathcal{C}}(x^{t}-\eta F(x^t))$ with a conservative step size $\eta=0.005$;
    \item Projected extra gradient (EG) method \cite{K76}: 
    $x^{t+1}=\text{Proj}_{\mathcal{C}}\left(x^{t}-\eta F\left(\text{Proj}_{\mathcal{C}}\left(x^{t}-\eta F\left(x^{t}\right)\right)\right)\right)$ with convergence-guaranteed step size $\eta = 1/\ell_F$.
\end{enumerate}
Here, $\text{Proj}_{\mathcal{C}}(\cdot)$ denotes the Euclidean projection onto the set $\mathcal{C}$. 
The comparison results are presented in Figure \ref{op-comparsion}, which indicate that the 
CGM outperforms GDA and EG methods in reducing the relative error $\|x^t-x^*\|/\|x^*\|$, both in terms of iteration counts and CPU time. 
\begin{figure}[!htbp]
\centering
\subfloat%[Iteration]
{\includegraphics[scale=0.3]{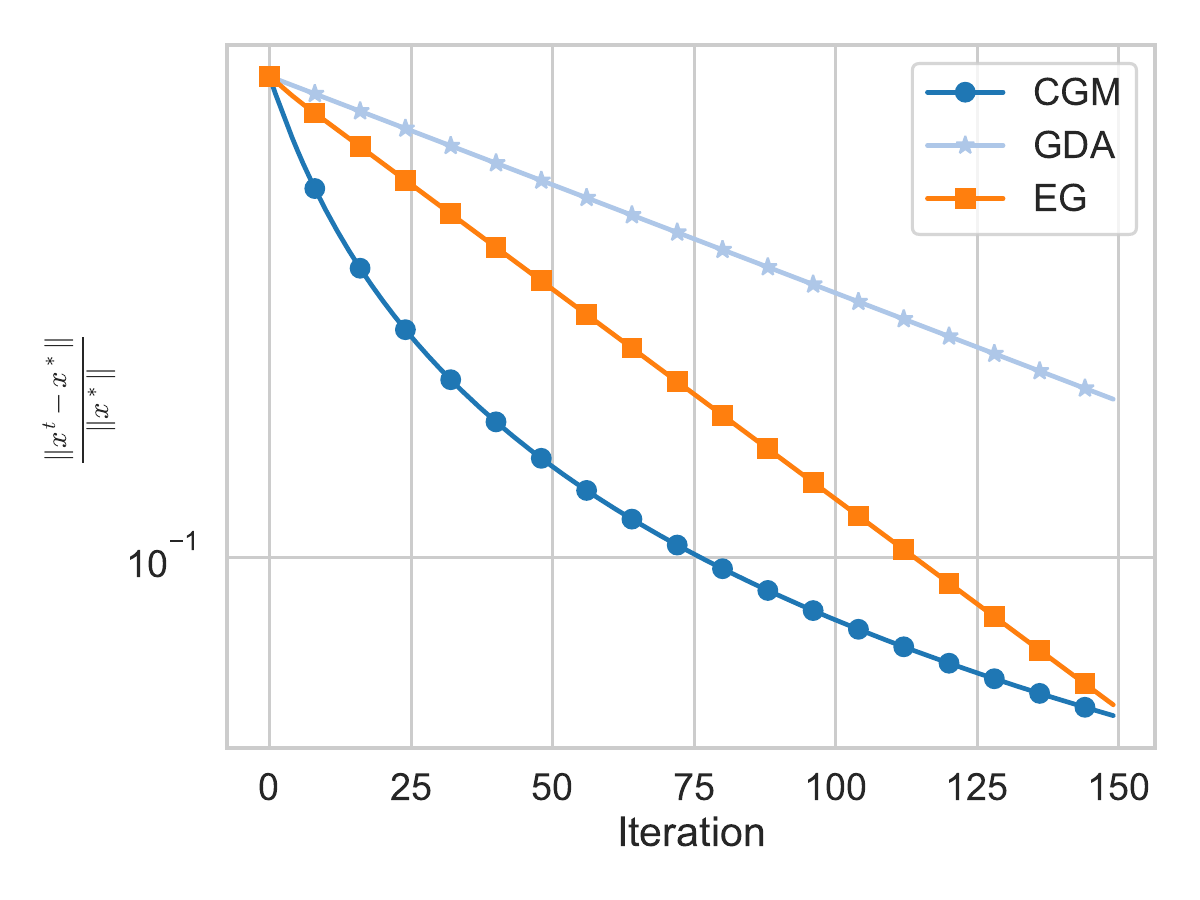}}
\subfloat%[CPU Time]
{\includegraphics[scale = 0.3]{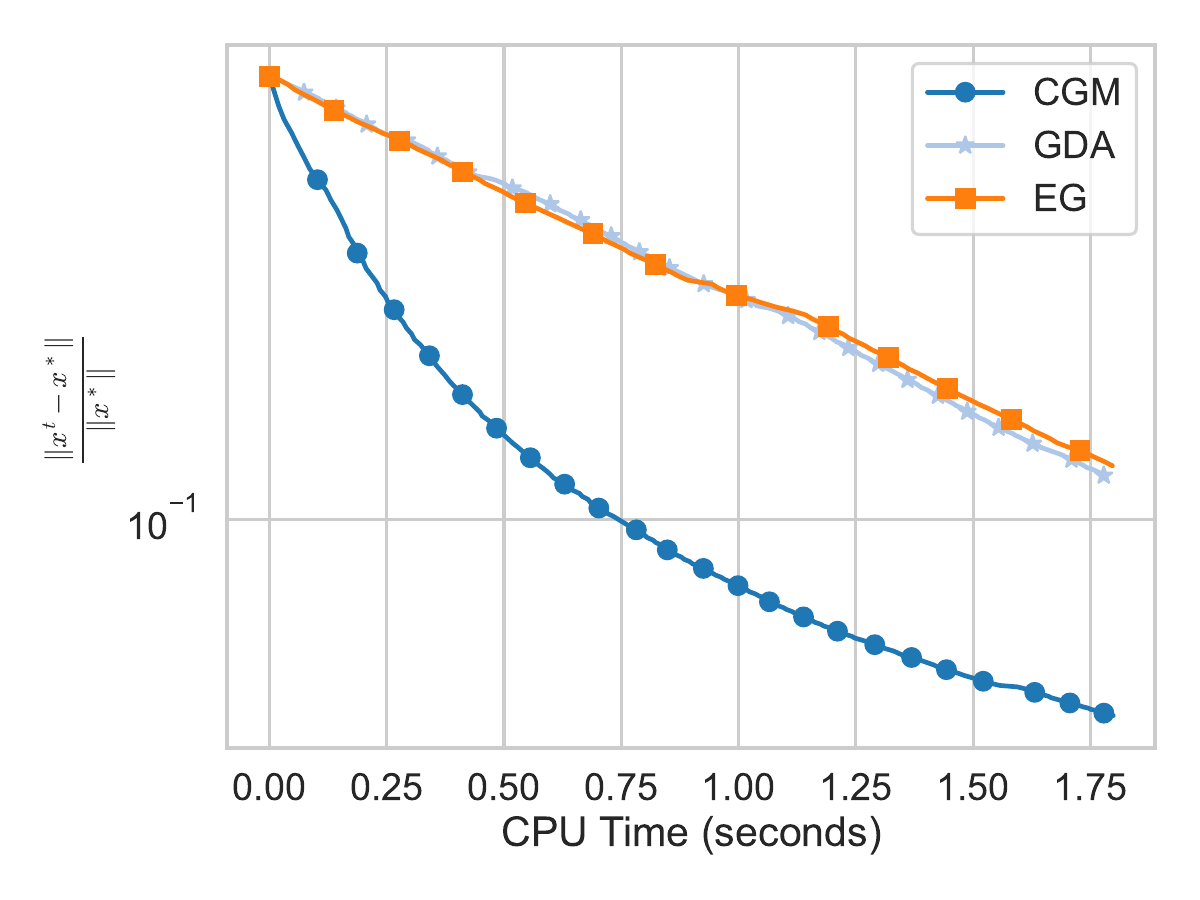}}
\caption{Comparison results of CGM versus GDA and EG methods on the HBG problem: relative error in distance to the optimum under fixed iteration counts and fixed CPU time constraints. Left: relative error v.s. iteration; Right: relative error v.s. CPU time.}
\label{op-comparsion}
\end{figure}

\section{Conclusions} \label{sec:conclusion}
In this paper, we investigated the convergence analysis of CGM for solving strongly convex optimization problems and strongly monotone VI problems with general functional constraints. We find that the assumptions made in the literature to establish the convergence of CGM are highly restrictive, and some are even inconsistent. To address this gap, we provide a new convergence rate analysis for CGM with appropriately chosen step sizes, under weaker and more reasonable assumptions, for the aforementioned problem classes.  Preliminary numerical results further demonstrate the effectiveness and potential of CGM in solving these problems. 

\bibliographystyle{alpha}
\bibliography{ref}

\begin{appendix}
\section{Related Work} \label{appendix:lr}
\subsection{Methods related to CGM for solving constrained minimization problems}

For the constrained minimization problem \eqref{pro:min_f(x)}, the algorithm closest to CGM is probably the sequential quadratic programming method (SQP) \cite{p78, P78b, Matlab25}. 
Recall that one key feature of CGM is its reduction of a complex problem to a sequence of simpler quadratic programming (QP) subproblems with linearized constraints. 
Likewise, SQP bases its iterations on lightweight QP subproblems.
With this in mind, we will first focus on highlighting the differences between CGM and SQP.

Under the additional assumption that $f$ and $g$ are twice continuously differentiable, to solve problem \eqref{pro:min_f(x)}, at iterate $t$, SQP requires solving the following QP subproblem to obtain the descent direction:
\begin{equation} \label{def:sqp}
    \begin{aligned} \min_{v\in\mathbb{R}^n} \quad & \nabla f(x^t)^{\top} v+\frac{1}{2} v^{\top} B^t v \\ \text {s.t.} \quad & g_i\!\left(x^t\right)+\nabla g_i\!\left(x^t\right)\!^{\top} v \leq 0, \quad i\in[m],\end{aligned}
\end{equation}
where $B^t\in \mathbb{R}^{n\times n}$ is typically required to be positive definite, and it serves as an approximation of the Hessian of the Lagrangian function $$\mathcal{L}(x,\lambda)=f(x)+\lambda^{\top} g(x), ~\text{with} ~ g(x)=(g_1(x),g_2(x),\ldots,g_m(x)),\; \lambda \in\mathbb{R}^{m}_+$$ evaluated at the current iterate $(x^t,\lambda^t)$. The iterate is updated by $x^{t+1} = x^t + \alpha_t v^t$, where $v^t$ is the optimal solution of \eqref{def:sqp}, and $\alpha_t$ is used determined by line search. The matrix $B^t$ in \eqref{def:sqp} is often updated by quasi-Newton methods. SQP is effective for solving
medium and small-sized nonlinear programs, but its scalability is limited by the high memory cost of QP subproblems in large-scale settings. This limitation has, in turn, motivated the development of various SQP methods incorporating the active set technique. Both Schittkowski \cite{S92, S09} and Liu \cite{L05} constructed (working) active sets to reduce the size of the QP subproblems in SQP methods by utilizing information from local constraint violations and multipliers. Specifically, Schittkowski's method presupposes that at most $m_w$ constraints are active. Using a fixed user-defined tolerance $\varepsilon$, it defines indices 
\[
J_t^* = \{i \mid g_i(x^t) \geq -\varepsilon \text{ or } \lambda_i^t > 0\} \quad \text{and} \quad K_t^* = [m] \setminus J_t^*.
\]
A heuristic then selects a subset $\bar{K}_t^* \subset K_t^*$ such that the working active set $W_t = J_t^* \cup \bar{K}_t^*$ has exactly $m_w$ elements.
Liu's method, on the other hand, requires extra theoretical conditions, including a strengthened MFCQ and uniformly bounded $B^t$. Its active set, $I(x^t, \lambda^t, \varepsilon_t) = \{i \mid g_i(x^t) \geq -(\varepsilon_t + \lambda_i^t)\}$,
relies on a varying tolerance parameter $\varepsilon_t$ that must converge to zero to ensure the algorithm's convergence.
In contrast, the QP subproblem in CGM is much simpler, as it does not require estimating second-order information of the Lagrangian function and typically employs a simple active-set technique that relies solely on local constraint violations $I_{x^t}:= \{ i \in [m] \mid g_i(x^t) \geq 0 \}$. 
This feature makes the QP subproblem in CGM easier to solve. 
Moreover, the step size strategy in CGM can adopt either a constant or a variable step size that satisfies certain conditions, without requiring a complex line search.

Next, 
consider that CGM computes a descent direction by projecting the negative gradient onto the linearized, sparse active constraints in \eqref{update-v-f}, and this projection-based approach for generating descent directions shares conceptual similarities with the Riemannian gradient descent method (RGD) \cite{PAR08}. We then discuss the similarities and differences between RGD and CGM. In RGD, the descent direction is obtained by projecting the negative Euclidean gradient onto the tangent space of the constraint manifold—a local linear approximation of the feasible set. This yields the Riemannian gradient $\mathrm{grad} f(x)$, specifically, for an equality-constrained manifold $\mathcal{M} = \{x \in \mathbb{R}^n : g_i(x) = 0, \, \forall  i \in [m]\}$ with regular constraints, i.e., the gradient vectors $\{\nabla g_1(x), \dots, \nabla g_m(x)\}$ are linearly independent at $x\in \mathcal{M}$, the tangent space at $x$ is given by $T_x\mathcal{M} = \{v \in \mathbb{R}^n : \nabla g_i(x)^\top v = 0, \, \forall  i \in [m]\}$, and the Riemannian gradient admits the variational characterization:
\[
\mathrm{grad} f(x) = \arg \min_{v \in T_{x}\mathcal{M}} \frac{1}{2} \| v + \nabla f(x) \|^2,
\]
which exhibits strong similarity with the update direction in \eqref{update-v-f}, in fact, the two are identical when $x^t \in \mathcal{M}$.
The key difference lies in how the next iterate is produced: RGD requires an additional retraction step to map points from the tangent space back onto the manifold, thereby maintaining feasibility. In contrast, CGM imposes no such feasibility requirement on $x^{t+1}$ lying in the feasible set.

\subsection{Recent progress for solving constrained VIs}
For solving \eqref{pro-vi}, one can adopt the traditional projected gradient method \cite{FP03} and the Frank-Wolfe method \cite{TJNO20}, which necessitates explicit projection or linear minimization oracles to address the feasibility issue. However, for general constrained sets, 
performing such oracles is prohibitively expensive, so an alternative line of work turns to primal-dual algorithms that exploit (augmented) Lagrangian reformulations \cite{YJC23, CYPJ24, DQM24, BDL23, ZZZ25}. Nevertheless, both these algorithms and their theoretical analysis heavily depend on the existence or the magnitude of the optimal Lagrange multipliers. Specifically, \cite{YJC23} proposed an ADMM-based interior point method, which was further improved by \cite{CYPJ24} through a warm-starting technique. 
\cite{DQM24} extended the constraint extrapolation method \cite{BDL23} for constrained minimization problems to constrained VIs, relying on bounded optimal Lagrange multipliers for step size selection and convergence rates, and requiring one projection per iteration onto a ``simple'' convex compact set to ensure the boundedness of the iterates theoretically. \cite{ZZZ25} developed a primal-dual system and designed the augmented Lagrangian-based ALAVI method. This algorithm ensures global convergence under a condition weaker than monotonicity—termed primal-dual variational coherence, but it requires two projections each iteration onto the dual cone $\mathcal{C}^*$. More recently, \cite{AA25} proposed a mirror descent type method for solving \eqref{pro-vi}. This method dynamically switches between objective-descent and constraint-descent steps, depending on whether the functional constraints satisfy $\max_{i\in [m]} g_i(x)\leq \varepsilon$ at each iteration. With time-varying step sizes, and assuming $F$ is bounded and monotone, the method achieves a $\mathcal{O}(1/\sqrt{T})$ convergence rate.

\end{appendix}

\end{document}